\documentclass[11pt,a4paper]{amsart}
\usepackage[utf8]{inputenc}
\usepackage{amsmath,amssymb, amsthm, amsfonts, gensymb, commath, float, mathtools, enumerate, breqn}
\usepackage{tabularx}
\usepackage{multirow}
\usepackage{esvect}
\usepackage{subcaption}
\usepackage{tikz}
\usetikzlibrary{positioning, arrows, arrows.meta, cd}
\usetikzlibrary{shapes,backgrounds,shapes,positioning,petri,topaths}

\theoremstyle{theorem}
\newtheorem{theorem}{Theorem}[section]

\newtheorem{proposition}[theorem]{Proposition}

\def\beq#1#2\eeq{%
        \begin{equation}%
        \label{#1}%
            #2%
        \end{equation}%
   }

\usepackage{color}
\usepackage{graphicx}
\usepackage{epstopdf}
\usepackage[text={15cm,23cm}]{geometry} 
\usepackage[colorlinks=true,%
            linkcolor=red!50!black,%
            citecolor=blue!50!black,%
            urlcolor=darkgray]{hyperref}  

\theoremstyle{definition}

\DeclareMathOperator{\tr}{tr}

\title[Markov fractions and Cohn matrices]{Markov fractions and Cohn matrices}

\author{A.P. Veselov}
\address{Department of Mathematical Sciences,
Loughborough University, Loughborough LE11 3TU, UK}
\email{A.P.Veselov@lboro.ac.uk}

\begin{document}

\maketitle

\begin{abstract}
We show that the Markov fractions introduced recently by Springborn coincide with the index of the Cohn matrices defined by Aigner.
This provides a simple concatenation rule for the corresponding continued fractions on the Conway topograph.
We discuss also the $q$-deformation of Markov fractions in the sense of Morier-Genoud and Ovsienko, as well as their ``metallic" version motivated by the work of Spalding and the author.
  \end{abstract}

\section{Introduction}

In 1880 Markov \cite{Markov} introduced his celebrated triples as the solutions of the Diophantine equation
$$x^2 + y^2 + z^2=3xyz.$$ Markov had shown that all its solutions 
in positive integers can be found recursively from $(1,1,1)$ using the Vieta 
involution 
$$
(x,y,z)\mapsto \left(x,y, z'\right),\qquad z' = \frac{x^2+y^2}{z}=3xy-z
$$
and permutation of variables.
He was motivated by the number theory of the binary quadratic forms, but in 1950s Gorshkov \cite{Gorshkov} and Cohn \cite{Cohn} explained the deep connections with the hyperbolic geometry. In the last decades Markov triples became of significance in other parts of mathematics, including algebraic geometry \cite{HP,Rud}.

Recently Springborn \cite{Springborn} introduced the notion of the {\it Markov fractions}, which are also parts of special triples of rationals with the denominators being Markov numbers:
$$
\frac{0}{1}, \frac{1}{2}, \frac{2}{5}, \frac{5}{13}, \frac{12}{29}, \frac{13}{34}, \frac{34}{89}, \frac{70}{169}, \frac{75}{194}, \frac{89}{233}, \frac{179}{433}, \frac{233}{610},  \frac{408}{985},  \dots 
$$
He proved that these fractions have some remarkable Diophantine properties and explained their role in hyperbolic geometry. Their significance for algebraic geometry was demonstrated in \cite{V25}, where it was shown that Markov fractions are precisely all possible slopes of the exceptional vector bundles on $\mathbb P^2.$

In this work we show that Markov fractions coincide with the index of the Cohn matrices defined by Aigner \cite{Aigner}.
On the Conway topograph this leads to a simple concatenation recursion for the corresponding continued fractions, which in a different way were described by Gbur \cite{Gbur}. We show also that a suitable generalisation of Cohn matrices describes in a similar way the $q$-deformation of Markov fractions in the sense of Morier-Genoud and Ovsienko \cite{MGO}.

In the last section we discuss ``metallic" generalisation of the Markov fractions depending on parameter $n \in \mathbb N$, motivated by the work of Spalding and the author \cite{SV1}. The corresponding metallic Markov numbers satisfy the modified Markov equation
$$
X^2 + Y^2 + Z^2=XYZ+4-4n^6
$$
with initial triple $X=Y=n^2+2, \,\, Z=4n^2+2.$
For $n=1$ we have usual Markov triples multiplied by 3.

\section{Markov fractions on the Conway topograph}

The well-known Farey sequence $F_n$ consists of all fractions in $[0,1]$ with denominators not exceeding $n$, e.g.
$$F_5 = \{\frac{0}{1}, \frac{1}{5}, \frac{1}{4}, \frac{1}{3}, \frac{2}{5}, \frac{1}{2}, \frac{3}{5}, \frac{2}{3}, \frac{3}{4}, \frac{4}{5}, \frac{1}{1} \}.$$
Farey noticed that the neighbours  $\frac{a}{b}, \frac{c}{d}$ satisfy the relation $\abs{ad-bc} = 1$ and conjectured that each new term in the Farey sequence is the {\it mediant} $\frac{a}{b}\oplus \frac{c}{d}=\frac{a+b}{c+d}$ of its neighbours.

It is much more convenient, however, to present all fractions in $[0,1]$ using the Conway topograph \cite{Conway}. 

Recall that the Conway topograph is a two-dimensional map with the boundaries forming a trivalent  tree imbedded in the plane. 
Originally Conway labelled the corresponding regions by the so-called lax vectors, but equivalently one can labelled them by the rational numbers using the Farey mediant, see Figure \ref{fig:Rational Conway}.
 Note that the neighbouring regions are occupied by the Farey neighbours $\frac{a}{b}, \frac{c}{d}$ with $\abs{ad-bc} = 1$. 

\begin{figure}[H]
\begin{center}
\begin{tabular}{c c}
	\begin{tikzpicture} 
	
	\node at (1.0,1) {$\frac{p_2}{q_2}$};
	\node at (-1.0,1) {$\frac{p_1}{q_1}$};
	\node at (0,3) {$\frac{p_1+p_2}{q_1+q_2}$};
	

        \draw[thick] (0, 0) -- (0, 2);
        \draw[thick] (0, 1) -- (0, 2);
	\draw[thick] (0,2) -- (1.73,3);
	\draw[thick] (0,2) -- (-1.73,3);

	\end{tikzpicture}

&
\begin{tikzpicture}[scale=0.65, every node/.style={scale=0.75}]
    \node at (1.75,0.75) {\Large$\frac{1}{1}$};
    \node at (-1.75,0.75){\Large$\frac{0}{1}$};
    \node at (0,4) {\Large$\frac{1}{2}$};
    \node at (3.11, 3.8) {\Large$\frac{2}{3}$};
    \node at (2.20, 5.76) {\Large$\frac{3}{5}$}; 
    \node at (4.37, 1.98) {\Large$\frac{3}{4}$}; 
    \node at (0.7, 6.3) {\Large$\frac{4}{7}$}; 
    \node at (3.8, 5.85) {\Large$\frac{5}{8}$}; 
    \node at (5.25, 3.35) {\Large$\frac{5}{7}$}; 
    \node at (4.2, 0.5) {\Large$\frac{4}{5}$}; 
    \node at (-3.11, 3.8) {\Large$\frac{1}{3}$};
    \node at (-2.20, 5.76) {\Large$\frac{2}{5}$}; 
    \node at (-4.37, 2.18) {\Large$\frac{1}{4}$}; 
    \node at (-0.7, 6.3) {\Large$\frac{3}{7}$}; 
    \node at (-3.8, 5.85) {\Large$\frac{3}{8}$}; 
    \node at (-5.25, 3.35) {\Large$\frac{2}{7}$}; 
    \node at (-4.2, 0.5) {\Large$\frac{1}{5}$}; 
    
        
        \draw[thick] (0, -0.5)  -- (0, 2);
    
        \draw[thick] (0,2) -- (1.73,3);
            
            \draw[thick] (1.73, 3) -- (1.99, 4.48);	
    
                \draw[thick] (1.99, 4.48) -- (1.31, 5.53);
                
                    \draw[thick] (1.31, 5.53) -- (0.36, 5.87);
                    \draw[thick] (1.31, 5.53) --  (1.23, 6.53);
                    
                \draw[thick] (1.99, 4.48) -- (2.98, 5.24);	
                
                    \draw[thick] (2.98, 5.24) -- (3.24, 6.21);
                    \draw[thick] (2.98, 5.24) -- (3.98, 5.24);
    
            \draw[thick] (1.73, 3) -- (3.14, 2.49);
    
                \draw[thick] (3.14, 2.49) -- (4.29, 2.97);
    
                    \draw[thick] (4.29, 2.97) -- (4.79, 3.84);
                    \draw[thick] (4.29, 2.97) -- (5.26, 2.71);
                
                \draw[thick] (3.14, 2.49) -- (3.72, 1.38);
                    
                    \draw[thick] (3.72, 1.38) -- (4.63, 0.96);
                    \draw[thick] (3.72, 1.38) -- (3.55, 0.39);
                    
        \draw[thick] (0,2) -- (-1.73,3);
        
            \draw[thick] (-1.73, 3) -- (-1.99, 4.48);	
        
                \draw[thick] (-1.99, 4.48) -- (-1.31, 5.53);
        
                    \draw[thick] (-1.31, 5.53) -- (-0.36, 5.87);
                    \draw[thick] (-1.31, 5.53) -- (-1.23, 6.53);
        
                \draw[thick] (-1.99, 4.48) -- (-2.98, 5.24);	
        
                    \draw[thick] (-2.98, 5.24) -- (-3.24, 6.21);
                    \draw[thick] (-2.98, 5.24) -- (-3.98, 5.24);
        
            \draw[thick] (-1.73, 3) -- (-3.14, 2.49);
        
                \draw[thick] (-3.14, 2.49) -- (-4.29, 2.97);
        
                    \draw[thick] (-4.29, 2.97) -- (-4.79, 3.84);
                    \draw[thick] (-4.29, 2.97) -- (-5.26, 2.71);
        
                \draw[thick] (-3.14, 2.49) -- (-3.72, 1.38);
        
                    \draw[thick] (-3.72, 1.38) -- (-4.63, 0.96);
                    \draw[thick] (-3.72, 1.38) -- (-3.55, 0.39);
\end{tikzpicture}
\end{tabular}
\end{center}
\caption{Conway topograph of rationals in $[0, 1].$} \label{fig:Rational Conway}
\end{figure}

To present in a similar way all the corresponding Markov fractions one should replace here the Farey mediant by the {\it Springborn mediant} \cite{Springborn}
\beq{Sm}
\frac{p_1}{q_1}*\frac{p_2}{q_2}=\frac{p_1q_1+p_2q_2}{q_1^2+q_2^2},\eeq 
or, in the reduced form,
\beq{mfrac}
\frac{p_1}{q_1}*\frac{p_2}{q_2}=\frac{p}{q}, \quad p=\frac{p_1q_1+p_2q_2}{p_2q_1-p_1q_2}, \,\,\, q=\frac {q_1^2+q_2^2}{p_2q_1-p_1q_2}.
\eeq

\begin{figure}[H]
\begin{center}
\begin{tabular}{c c}
	\begin{tikzpicture} 
	
	\node at (1.0,1) {$\frac{p_2}{q_2}$};
	\node at (-1.0,1) {$\frac{p_1}{q_1}$};
	\node at (0,3) {$\frac{p'}{q'}=\frac{p_1q_1+p_2q_2}{q_1^2+q_2^2}$};
	
	\node at (0,5) {$p'=\frac{p_1q_1+p_2q_2}{p_2q_1-p_1q_2}$};
	\node at (0,4) {$q'=\frac{q_1^2+q_2^2}{p_2q_1-p_1q_2}$};

	\draw[thick, ->] (0, 0) -- (0, 1);
        \draw[thick] (0, 1) -- (0, 2);
        \draw[thick] (0, 0) -- (0, 2);
        \draw[thick] (0, 1) -- (0, 2);
	\draw[thick] (0,2) -- (1.73,3);
	\draw[thick] (0,2) -- (-1.73,3);

	\end{tikzpicture}

&
\begin{tikzpicture}[scale=0.65, every node/.style={scale=0.75}]
    \node at (1.75,0.75) {\Large$\frac{1}{2}$};
    \node at (-1.75,0.75){\Large$\frac{0}{1}$};
    \node at (0,4) {\Large$\frac{2}{5}$};
    \node at (3.11, 3.8) {\Large$\frac{12}{29}$};
    \node at (2.20, 5.76) {\Large$\frac{179}{433}$}; 
    \node at (4.37, 1.98) {\Large$\frac{70}{169}$}; 
    \node at (0.7, 6.3) {\Large$\frac{2673}{6466}$}; 
    \node at (3.8, 5.85) {\Large$\frac{15571}{37666}$}; 
    \node at (5.25, 3.35) {\Large$\frac{6089}{14701}$}; 
    \node at (4.2, 0.5) {\Large$\frac{408}{985}$}; 
    \node at (-3.11, 3.8) {\Large$\frac{5}{13}$};
    \node at (-2.20, 5.76) {\Large$\frac{75}{194}$}; 
    \node at (-4.37, 2.18) {\Large$\frac{13}{34}$}; 
    \node at (-0.7, 6.3) {\Large$\frac{1120}{2897}$}; 
    \node at (-3.8, 5.85) {\Large$\frac{2923}{7561}$}; 
    \node at (-5.25, 3.35) {\Large$\frac{507}{1325}$}; 
    \node at (-4.2, 0.5) {\Large$\frac{34}{89}$}; 
    
        
        \draw[thick] (0, -0.5)  -- (0, 2);
    
        \draw[thick] (0,2) -- (1.73,3);
            
            \draw[thick] (1.73, 3) -- (1.99, 4.48);	
    
                \draw[thick] (1.99, 4.48) -- (1.31, 5.53);
                
                    \draw[thick] (1.31, 5.53) -- (0.36, 5.87);
                    \draw[thick] (1.31, 5.53) --  (1.23, 6.53);
                    
                \draw[thick] (1.99, 4.48) -- (2.98, 5.24);	
                
                    \draw[thick] (2.98, 5.24) -- (3.24, 6.21);
                    \draw[thick] (2.98, 5.24) -- (3.98, 5.24);
    
            \draw[thick] (1.73, 3) -- (3.14, 2.49);
    
                \draw[thick] (3.14, 2.49) -- (4.29, 2.97);
    
                    \draw[thick] (4.29, 2.97) -- (4.79, 3.84);
                    \draw[thick] (4.29, 2.97) -- (5.26, 2.71);
                
                \draw[thick] (3.14, 2.49) -- (3.72, 1.38);
                    
                    \draw[thick] (3.72, 1.38) -- (4.63, 0.96);
                    \draw[thick] (3.72, 1.38) -- (3.55, 0.39);
                    
        \draw[thick] (0,2) -- (-1.73,3);
        
            \draw[thick] (-1.73, 3) -- (-1.99, 4.48);	
        
                \draw[thick] (-1.99, 4.48) -- (-1.31, 5.53);
        
                    \draw[thick] (-1.31, 5.53) -- (-0.36, 5.87);
                    \draw[thick] (-1.31, 5.53) -- (-1.23, 6.53);
        
                \draw[thick] (-1.99, 4.48) -- (-2.98, 5.24);	
        
                    \draw[thick] (-2.98, 5.24) -- (-3.24, 6.21);
                    \draw[thick] (-2.98, 5.24) -- (-3.98, 5.24);
        
            \draw[thick] (-1.73, 3) -- (-3.14, 2.49);
        
                \draw[thick] (-3.14, 2.49) -- (-4.29, 2.97);
        
                    \draw[thick] (-4.29, 2.97) -- (-4.79, 3.84);
                    \draw[thick] (-4.29, 2.97) -- (-5.26, 2.71);
        
                \draw[thick] (-3.14, 2.49) -- (-3.72, 1.38);
        
                    \draw[thick] (-3.72, 1.38) -- (-4.63, 0.96);
                    \draw[thick] (-3.72, 1.38) -- (-3.55, 0.39);
\end{tikzpicture}
\end{tabular}
\end{center}
\caption{Conway topograph  $\mathcal T_M$ of Markov fractions.} \label{fig:Markov Conway}
\end{figure}

Juxtaposition of these two topographs establishes the bijection \cite{Springborn}
\beq{bij}
\mu: \mathbb Q\cap [0,1] \to \mathcal {MF}_R,
\eeq
where  $\mathcal {MF}_R$ is the set of Markov fractions between 0 and $1/2,$ which is a fundamental domain of the natural action on the set $\mathcal {MF}$ of all Markov fractions by the integer affine group $\textrm{Aff}_1(\mathbb Z).$

By definition the {\it Springborn function} $\mu(x)$ satisfies the property
\beq{muprop}
\mu\left(\frac{a}{b}\oplus\frac{c}{d}\right)=\mu\left(\frac{a}{b}\right)*\mu\left(\frac{c}{d}\right),\quad |ad-bc|=1,
\eeq
intertwining the Farey and Sprinborn mediants of the neighbours on the Farey tree.

Springborn proved that the Markov fractions $\frac{p}{q}$ together with their properly defined companions are the worst approximable (in certain precise sense) rational numbers (some related results were found earlier in \cite{Gbur}). 

He proved also that the numbers $p,q$, which are defined recursively on the Conway topograph by the local rule (\ref{mfrac}) with the initial data $\frac{0}{1}$ and $\frac{1}{2},$ are coprime integers, with the denominators $q$ being the Markov numbers. This can be derived from the following properties of the Markov fractions (see \cite{Springborn,V25}).  

\begin{figure}[h]
\begin{center}
\includegraphics[width=60mm]{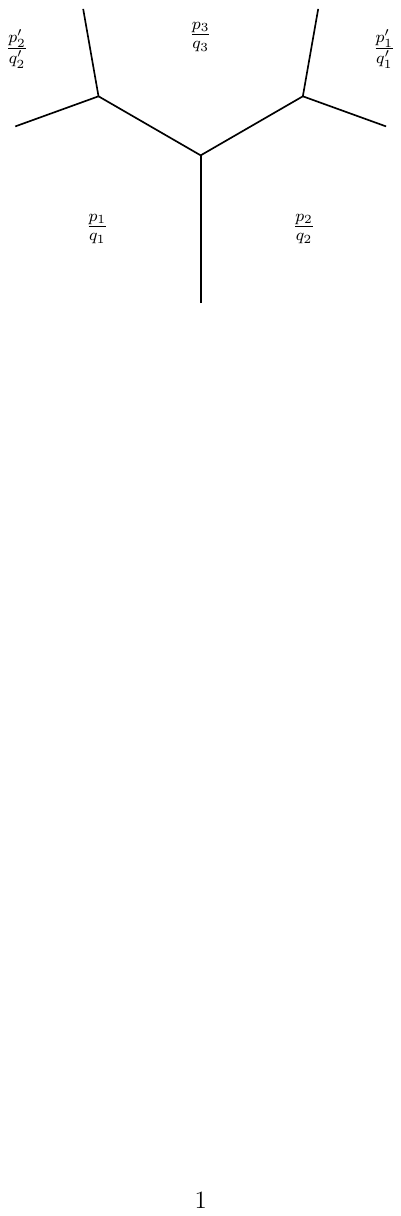} 
\caption{Part of the Conway topograph with Markov fractions.}
\end{center}
\end{figure}

\begin{proposition}
The numbers on a part of the Markov fraction tree shown above satisfy the following relations:
\beq{rel12}
p_2q_3-p_3q_2=q_1,\quad
p_3q_1-p_1q_3=q_2,
\eeq
\beq{rel3}
p_2q_1-p_1q_2=\frac{q_1^2+q_2^2}{q_3}=3q_1q_2-q_3,
\eeq
\beq{rel1'}
p_1'=\frac{p_2q_2+p_3q_3}{q_1}, \quad q_1'=\frac{q_2^2+q_3^2}{q_1},
\eeq
\beq{rel2'}
p_2'=\frac{p_1q_1+p_3q_3}{q_2}, \quad q_2'=\frac{q_1^2+q_3^2}{q_2}.
\eeq
\end{proposition}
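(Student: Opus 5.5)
Since the figure is not reproduced, we fix the configuration it indicates. At a vertex of $\mathcal T_M$ three regions $\frac{p_1}{q_1},\frac{p_2}{q_2},\frac{p_3}{q_3}$ meet, with $\frac{p_3}{q_3}=\frac{p_1}{q_1}*\frac{p_2}{q_2}$ produced from the edge between the first two. The regions $\frac{p_1'}{q_1'}$ and $\frac{p_2'}{q_2'}$ are the ones beyond the edges opposite to $\frac{p_1}{q_1}$ and $\frac{p_2}{q_2}$ respectively, so that
\[
\frac{p_1'}{q_1'}=\frac{p_3}{q_3}*\frac{p_2}{q_2}, \qquad \frac{p_2'}{q_2'}=\frac{p_1}{q_1}*\frac{p_3}{q_3},
\]
up to the order of the arguments dictated by the orientation.

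The proof is by induction along the tree, starting from the initial edge $\frac01,\frac12$. The inductive hypothesis is that for every already constructed vertex-triple with $\frac{p_3}{q_3}$ the child of the edge $(1,2)$, the following hold:
\begin{itemize}
\item $(q_1,q_2,q_3)$ is a Markov triple;
\item the determinant of the parent edge is $\Delta:=p_2q_1-p_1q_2=\frac{q_1^2+q_2^2}{q_3}$, which is (\ref{rel3});
\item the two new edges satisfy (\ref{rel12}).
\end{itemize}
For the base case, $\frac01,\frac12$ give $\frac25$ with $(q_1,q_2,q_3)=(1,2,5)$ and $\Delta=1=(1+4)/5$. One checks directly that $p_2q_3-p_3q_2=5-4=1=q_1$ and $p_3q_1-p_1q_3=2=q_2$.

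\textbf{Relations (\ref{rel3}) and (\ref{rel12}).} By the reduced form (\ref{mfrac}) we have $p_3=\frac{p_1q_1+p_2q_2}{\Delta}$ and $q_3=\frac{q_1^2+q_2^2}{\Delta}$. Hence $\Delta=(q_1^2+q_2^2)/q_3$, which is the first equality in (\ref{rel3}). The second equality, $(q_1^2+q_2^2)/q_3=3q_1q_2-q_3$, is exactly the Markov equation for $(q_1,q_2,q_3)$. For (\ref{rel12}), substitute the expressions for $p_3,q_3$ and compute
\[
p_2q_3-p_3q_2=\frac{p_2(q_1^2+q_2^2)-(p_1q_1+p_2q_2)q_2}{\Delta}=\frac{q_1(p_2q_1-p_1q_2)}{\Delta}=q_1 .
\]
The computation of $p_3q_1-p_1q_3=q_2$ is symmetric. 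These are pure algebraic identities and need no induction.

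\textbf{Relations (\ref{rel1'}) and (\ref{rel2'}).} Apply (\ref{mfrac}) to the edge $(\frac{p_3}{q_3},\frac{p_2}{q_2})$. By (\ref{rel12}) its determinant equals $q_1$, so
\[
p_1'=\frac{p_3q_3+p_2q_2}{q_1},\qquad q_1'=\frac{q_2^2+q_3^2}{q_1},
\]
which is (\ref{rel1'}). In the same way, the edge $(\frac{p_1}{q_1},\frac{p_3}{q_3})$ has determinant $q_2$, and this gives (\ref{rel2'}).

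\textbf{Propagating the hypothesis.} Since $(q_1,q_2,q_3)$ is a Markov triple, $q_1'=(q_2^2+q_3^2)/q_1=3q_2q_3-q_1$ is its Vieta involution. Therefore $(q_2,q_3,q_1')$ is again a Markov triple, and similarly $(q_1,q_3,q_2')$. This is exactly the hypothesis required at the two new vertices. The determinant of each new parent edge is $q_1$ (respectively $q_2$), and this equals $(q_2^2+q_3^2)/q_1'$ (respectively $(q_1^2+q_3^2)/q_2'$), as required for (\ref{rel3}) there. Integrality follows from $q_1'=3q_2q_3-q_1$. For the numerators, $p_1'$ is an integer because $p_1'=p_3q_3+p_2q_2$ divided by the determinant is produced by the same local rule that was shown to give integers at the previous step. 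Since every vertex of $\mathcal T_M$ is reached from the initial edge by such moves, the induction covers the whole tree.

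\textbf{Main obstacle.} The algebra is routine; the delicate part is bookkeeping. One must fix the sign and orientation conventions so that the determinant $p_2q_1-p_1q_2$ is taken in the correct order on each edge and stays positive. In particular, at each new vertex the newly created fraction has to be relabelled as ``3'' with its parents in the order $(1,2)$, and one must check that the orientation is compatible with the ordering of fractions in $[0,\frac12]$.
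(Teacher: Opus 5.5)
Your argument is correct. The paper gives no proof of this proposition; it only points to \cite{Springborn,V25}, so there is no argument in the text to compare against, and yours serves as a self-contained proof.

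It also shows that almost nothing needs induction. With $p,q$ defined by the local rule (\ref{mfrac}), relations (\ref{rel12}) and the first equality in (\ref{rel3}) are algebraic identities. Relations (\ref{rel1'}) and (\ref{rel2'}) are just (\ref{mfrac}) applied to the two new edges, once (\ref{rel12}) identifies their determinants as $q_1$ and $q_2$. The only inductive input is the Markov equation behind $\frac{q_1^2+q_2^2}{q_3}=3q_1q_2-q_3$, which propagates by the Vieta involution.

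Your ``main obstacle'' takes care of itself. Since the new determinants are $q_1,q_2>0$, positivity and the ordering $\frac{p_1}{q_1}<\frac{p_3}{q_3}<\frac{p_2}{q_2}$ propagate automatically. So the order of arguments in (\ref{mfrac}) is always the one giving a positive determinant.

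You should delete or repair the sentence claiming $p_1'$ is an integer. Saying it is ``produced by the same local rule that was shown to give integers'' is circular, because nothing earlier showed that. The proposition does not assert integrality, but it is easy to get. Substitute $\Delta=3q_1q_2-q_3$ into $\Delta p_3=p_1q_1+p_2q_2$ to obtain $q_1(3q_2p_3-p_1)=p_2q_2+p_3q_3$, i.e.\ $p_1'=3q_2p_3-p_1$. Likewise $p_2'=3q_1p_3-p_2$. These linear Vieta-type recursions for the numerators give integrality by induction.
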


In particular, comparing this with the Vieta recursion for the Markov triples, we see that the denominators of the Markov fractions are indeed Markov numbers.

\section{Markov fractions and Cohn matrices}
    
Cohn \cite{Cohn} proposed the following remarkable way to compute Markov numbers.
Instead of the fractions and Farey mediant Cohn used $2\times 2$ matrices with matrix multiplication rule, starting with the special matrices
\begin{equation}
\label{Cohn_1}
A = \begin{pmatrix} 1 & 1 \\ 1 & 2 \end{pmatrix}, \qquad
B = \begin{pmatrix} 3 & 2 \\ 4 & 3 \end{pmatrix}.
\end{equation}
The corresponding Conway topograph $\mathcal T_C$ is shown on the right of Figure \ref{fig:CFTree}.

The relation between Markov and Cohn topographs is given by the trace map \cite{Aigner, SV1}:
$$
C \mapsto m = \frac{1}{3}\tr \, C,
$$
or, by taking the off-diagonal element $m=C_{12}$ (see 
Fig.~\ref{fig:CFTree}).

\begin{figure}[h]
\begin{center}
    \begin{tikzpicture}[scale=0.65, every node/.style={scale=0.75}]
        \node at (1.75,0.75) {$2$};
	\node at (-1.75,0.75){$1$};
	\node at (0,4) {$5$};
	\node at (3.11, 3.8) {$29$};
	\node at (-3.11, 3.8) {$13$};
        \draw[thick] (0, -0.5) -- (0, 2);
		\draw[thick] (0,2) -- (1.73,3);
			\draw[thick] (1.73, 3) -- (1.99, 4.48);	
				\draw[thick] (1.99, 4.48) -- (1.31, 5.53);
				\draw[thick] (1.99, 4.48) -- (2.98, 5.24);	
			\draw[thick] (1.73, 3) -- (3.14, 2.49);
				\draw[thick] (3.14, 2.49) -- (4.29, 2.97);
				\draw[thick] (3.14, 2.49) -- (3.72, 1.38);
		\draw[thick] (0,2) -- (-1.73,3);
			\draw[thick] (-1.73, 3) -- (-1.99, 4.48);	
				\draw[thick] (-1.99, 4.48) -- (-1.31, 5.53);
				\draw[thick] (-1.99, 4.48) -- (-2.98, 5.24);	
			\draw[thick] (-1.73, 3) -- (-3.14, 2.49);
				\draw[thick] (-3.14, 2.49) -- (-4.29, 2.97);
				\draw[thick] (-3.14, 2.49) -- (-3.72, 1.38);
 \begin{scope}[xshift=10cm]
        \node at (-1.75,0.75) {$\displaystyle\left(\begin{array}{cc}
                                                1 & 1 \\
                                                1 & 2 
                                              \end{array}\right)$};
	\node at (1.75,0.75){$\displaystyle\left(\begin{array}{cc}
                                                3 & 2 \\
                                                4 & 3 
                                              \end{array}\right)$};
	\node at (0,4) {$\displaystyle\left(\begin{array}{cc}
                                                7 & 5 \\
                                               11 & 8 
                                              \end{array}\right)$};
	\node at (3.2, 3.8) {$\displaystyle\left(\begin{array}{cc}
                                               41 & 29 \\
                                               65 & 46
                                              \end{array}\right)$};
	\node at (-3.2, 3.8) {$\displaystyle\left(\begin{array}{cc}
                                               18 & 13 \\
                                               29 & 21
                                              \end{array}\right)$};
        \draw[thick] (0, -0.5) -- (0, 2);
		\draw[thick] (0,2) -- (1.73,3);
			\draw[thick] (1.73, 3) -- (1.99, 4.48);	
				\draw[thick] (1.99, 4.48) -- (1.31, 5.53);
				\draw[thick] (1.99, 4.48) -- (2.98, 5.24);	
			\draw[thick] (1.73, 3) -- (3.14, 2.49);
				\draw[thick] (3.14, 2.49) -- (4.29, 2.97);
				\draw[thick] (3.14, 2.49) -- (3.72, 1.38);
		\draw[thick] (0,2) -- (-1.73,3);
			\draw[thick] (-1.73, 3) -- (-1.99, 4.48);	
				\draw[thick] (-1.99, 4.48) -- (-1.31, 5.53);
				\draw[thick] (-1.99, 4.48) -- (-2.98, 5.24);	
			\draw[thick] (-1.73, 3) -- (-3.14, 2.49);
				\draw[thick] (-3.14, 2.49) -- (-4.29, 2.97);
				\draw[thick] (-3.14, 2.49) -- (-3.72, 1.38);
  \end{scope}
	\end{tikzpicture}
\caption{Markov and Cohn topographs related by the trace map}
\label{fig:CFTree}
\end{center}
\end{figure}

Aigner \cite{Aigner} classified all possible generalisation of the initial matrices $A$ and $B$, preserving the relation with Markov numbers:
\begin{equation}
	\label{AigCohn}
	A (a):= \begin{pmatrix}
		a & 1 \\ 3a-a^2-1 & 3-a
	\end{pmatrix}, \quad
	B(a):= \begin{pmatrix}
		2a+1 & 2 \\ -2a^2+4a+2 & 5-2a
	\end{pmatrix},
\end{equation}
where $a \in \mathbb Z$ is arbitrary. The original Cohn's choice corresponds to $a=1$.

Let 
\begin{equation}
	\label{C_t}
	C_t(a)= \begin{pmatrix}
		a_t & m_t \\ c_t & 3m_t-a_t
	\end{pmatrix} 
\end{equation}
be the Cohn matrix corresponding to $t\in \mathbb Q_{[0,1]}$, such that $C_{\frac{0}{1}}(a)= A(a), \,\, C_{\frac{1}{1}}(a)= B(a).$ Following Aigner \cite{Aigner}, we define the {\it index} of Cohn matrix $C_t(a)$ by
\begin{equation}
	\label{index}
	I_t(a)= \frac{a_t}{m_t}.
\end{equation}
Aigner proved that $I_t(a)$ is monotone on $t\in [0,1]$ and used this to show that for every $a\in \mathbb Z$ all the Cohn matrices $C_t(a)$ in the Cohn tree are distinct. Note that the same claim for the Markov numbers is known as Frobenius Uniqueness Conjecture and still to be proved.

Our first result is the following observation.\footnote{A similar observation for $a=3$ was also made recently by Jouteur et al in \cite{JPRT}.}

\begin{theorem}
\label{main}
The Markov fractions in the fundamental domain $[a, a+\frac{1}{2}]$ coincide with the indices $I_t(a)$ of the corresponding quantum Cohn matrices:
$$
\mu(t)=I_t(a).
$$
\end{theorem}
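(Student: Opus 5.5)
The plan is to reduce to a single value of $a$ by conjugation, and then run an induction down the Cohn tree. The inductive step will show that matrix multiplication of Farey‑neighbouring Cohn matrices induces the Springborn mediant \eqref{Sm} on their indices. I read the statement as $I_t(a)=a+\mu(t)$, i.e.\ the translate of $\mu(t)\in\mathcal{MF}_R$ by the integer affine group into $[a,a+\tfrac12]$. As a sanity check for $a=1$:
\begin{itemize}
\item $A(1)$ has index $1=1+\tfrac01$;
\item $B(1)$ has index $\tfrac32=1+\tfrac12$;
\item $AB=\begin{pmatrix}7&5\\11&8\end{pmatrix}$ has index $\tfrac75=1+\tfrac25$.
\end{itemize}

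\textbf{Step 1 (independence of $a$).} Let $S_k=\begin{pmatrix}1&0\\k&1\end{pmatrix}$. A direct computation shows that $S_k^{-1}MS_k$ keeps the $(1,2)$ entry $m$ and replaces the $(1,1)$ entry $a_t$ by $a_t+km$. It also preserves the trace, so the form \eqref{C_t} is kept. One checks from \eqref{AigCohn} that $A(a)=S_a^{-1}A(0)S_a$ and $B(a)=S_a^{-1}B(0)S_a$. Since conjugation respects products, $C_t(a)=S_a^{-1}C_t(0)S_a$ for every $t$, hence $I_t(a)=I_t(0)+a$. It therefore suffices to prove $I_t(0)=\mu(t)$.

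\textbf{Step 2 (stronger inductive claim).} For $a=0$ I will prove that $a_t=p_t$ and $m_t=q_t$ exactly, not only as ratios, where $\mu(t)=p_t/q_t$ in lowest terms. The base cases are
\[
A(0)=\begin{pmatrix}0&1\\-1&3\end{pmatrix},\qquad B(0)=\begin{pmatrix}1&2\\2&5\end{pmatrix},
\]
giving $0/1$ and $1/2$.

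For the inductive step, take Farey neighbours $t_1<t_2$ with $C_3=C_1C_2$. Write $a_i,m_i$ for the entries, eliminate $c_i$ using $\det C_i=1$ (namely $m_ic_i=a_i(3m_i-a_i)-1$), and set $d=a_2m_1-a_1m_2$. Then
\[
m_3=a_1m_2+m_1(3m_2-a_2)=3m_1m_2-d,\qquad a_3=a_1a_2+m_1c_2 .
\]
Comparing with the local rule \eqref{mfrac}, I need to show
\[
d\,m_3=m_1^2+m_2^2,\qquad d\,a_3=a_1m_1+a_2m_2 .
\]
The second identity should follow from the first after substituting $c_2$ and clearing the denominator $m_2$.

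\textbf{Step 3 (the auxiliary invariant — main obstacle).} The first identity is equivalent to $d(3m_1m_2-d)=m_1^2+m_2^2$. In other words, $d$ must be the Vieta partner $3m_1m_2-m_3$ of $m_3$ in the Markov triple $(m_1,m_2,m_3)$. This matches \eqref{rel3} of the Proposition, and it does not follow from the data of $C_1,C_2$ alone.

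So I will strengthen the induction hypothesis. For each pair of Farey neighbours, I carry along the relations \eqref{rel12}–\eqref{rel3} with $(p_i,q_i)$ replaced by $(a_i,m_i)$. I will then check that they propagate to the two new neighbour pairs $(C_1,C_1C_2)$ and $(C_1C_2,C_2)$, exactly as in the Proposition for Markov fractions.

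I expect this to work as follows. The quantity $a_2m_1-a_1m_2$ for the pair $(C_1,C_3)$ can be computed from $C_3=C_1C_2$ and $\det C_1=1$; it should come out as $m_2$, which is the analogue of \eqref{rel12}. The Markov equation for $(m_1,m_2,m_3)$ then closes the induction, via the trace identity $\tr(C_1C_2)+\tr(C_1C_2^{-1})=\tr C_1\tr C_2$ together with $\tr C=3m$.

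Finally, since the Springborn topograph is generated from $0/1,1/2$ by the same local rule, and $\mu$ intertwines the mediants by \eqref{muprop}, the two labellings agree everywhere, giving $\mu(t)=I_t(0)$. Reducedness of $a_t/m_t$ is automatic from $\det C_t=1$.
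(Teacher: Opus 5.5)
Your proposal is correct, but it argues differently from the paper.

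\emph{Reduction to $a=0$.} The paper only asserts this reduction. Your conjugation $C_t(a)=S_a^{-1}C_t(0)S_a$, which gives $I_t(a)=a+I_t(0)$, actually justifies it.

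\emph{The paper's inductive step.} The paper takes the Markov-fraction side as known. It imports Springborn's relations \eqref{rel12} and \eqref{rel1'}, together with the classical fact that the off-diagonal entries of Cohn matrices are the Markov numbers. It then checks only that the $(1,1)$ entry of $C_3C_2$ equals $p_1'$, and this check collapses to $(p_2q_3-p_3q_2-q_1)(q_3-p_2q_1)=0$.

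\emph{Your inductive step.} You show instead that Cohn multiplication itself induces the local rule \eqref{mfrac} on first rows. Nothing about Markov fractions is used beyond their recursive definition. Your steps close as you expect:
\begin{itemize}
\item The identity $d\,a_3=a_1m_1+a_2m_2$ does follow from $d\,m_3=m_1^2+m_2^2$ after clearing $m_2$.
\item Since $m_3=3m_1m_2-d$ identically, the identity $d\,m_3=m_1^2+m_2^2$ is just the Markov equation for $(m_1,m_2,m_3)$. You could simply cite it, as the paper does.
\item If you prefer to prove it, use $C_2=C_1^{-1}C_3$ and $C_1=C_3C_2^{-1}$, with $\det=1$ and $\tr C_3=3m_3$. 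These give $a_3m_1-a_1m_3=m_2$ and $a_2m_3-a_3m_2=m_1$.
\item So each new pair again has $d$ equal to the opposite Markov number, and the Markov equation propagates by the Vieta involution. The trace identity you mention is not needed.
\end{itemize}

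\emph{Comparison.} Your route is self-contained and explains why the Springborn mediant appears at all. As a by-product it derives \eqref{rel12}--\eqref{rel3}, and the integrality and coprimality of $p,q$, from $\det C_t=1$ instead of quoting them. The paper's route is shorter, given Springborn's results.
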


\begin{proof}
It is enough to prove this for $a=0$ corresponding to the choice of the fundamental domain $[0, \frac{1}{2}]$ of the group $\textrm{Aff}_1(\mathbb Z),$ acting by $x \to \pm x +m, \, m\in \mathbb Z.$

For $a=0$ we have the initial Cohn matrices 
\begin{equation}
	\label{Aig0}
	A (0):= \begin{pmatrix}
		0 & 1 \\ -1 & 3
	\end{pmatrix}, \quad
	B(0):= \begin{pmatrix}
		1 & 2 \\ 2 & 5
	\end{pmatrix},
\end{equation}
and a part of the Conway topograph shown on Fig. 5.

\begin{figure}[h]
\begin{center}
\includegraphics[width=60mm]{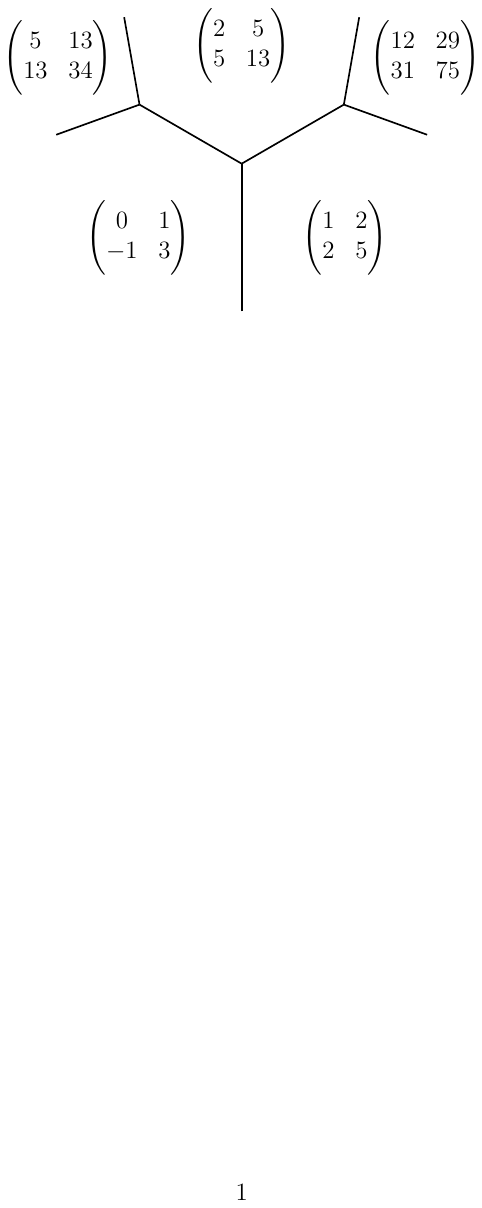} 
\caption{\small Cohn matrices for $a=0.$}
\end{center}
\end{figure}

We prove the theorem by induction. The statement is clearly true for the Cohn matrices shown on Fig. 5.

Assume now that the claim is true for the Markov fractions $\frac{p_1}{q_1}, \frac{p_2}{q_2}, \frac{p_3}{q_3}$ shown on Fig. 3 and prove that then it is true for the fractions $\frac{p_1'}{q_1'},  \frac{p_2'}{q_2'}$ as well. It is enough to prove this for the fraction $\frac{p_1'}{q_1'}.$

By the assumption and properties of the Cohn matrices for $a=0$ we must prove that
$$
\begin{pmatrix}
		p_3 & q_3 \\ c_3 & d_3
	\end{pmatrix}
\begin{pmatrix}
		p_2 & q_2 \\ c_2 & d_2
	\end{pmatrix}
 =\begin{pmatrix}
		p_1' & q_1' \\ c_1'& d_1'
	\end{pmatrix},
	$$
where $c_k, d_k$ are uniquely determined by the conditions $\det C_k=1, \,\, tr \, C_k=3q_k$, holding for all Cohn matrices:
$$
d_k=3q_k-p_k,\,\,\,\, c_k=\frac{3p_kq_k-p_k^2-1}{q_k}.
$$
Since $q_1'$ is the corresponding Markov number, we need to prove only the equality of the first matrix entries of both sides:
$$p_2p_3+q_3c_2=p_1'.$$
By the relation (\ref{rel1'}) for Markov fractions $p_1'=\frac{p_2q_2+p_3q_3}{q_1}$, so we need to show that
$$
p_2p_3+q_3\frac{3p_2q_2-p_2^2-1}{q_2}=\frac{p_2q_2+p_3q_3}{q_1},
$$
or, after multiplication by $q_1q_2$,
$$
p_2p_3q_1q_2+3p_2q_1q_2q_3-p_2^2q_1q_3-q_1q_3-p_2q_2^2-p_3q_2q_3.
$$
Using the Markov relation $q_1^2+q_2^2+q_3^2=3q_1q_2q_3,$ 
we can rewrite this as
$$
p_2p_3q_1q_2+p_2q_1^2+p_2q_3^2-p_2^2q_1q_3-q_1q_3-p_3q_2q_3=0,
$$
or, equivalently, as
$
(p_2q_3-p_3q_2-q_1)(q_3-p_2q_1)=0,
$
which is indeed zero by the property (\ref{rel12}) of the Markov fractions.
\end{proof}

We will use this now to describe the continued fraction expansion of the Markov fractions, which were first described in a more complicated way by Gbur \cite{Gbur}.

\section{Mirror Conway topographs and Markov continued fractions}

Consider now the Conway topograph $\mathcal T_C(2)$ of Cohn matrices with $a=2$ and its mirror image $\mathcal T_C(2)^*$ after reflection in vertical line and taking the transposition of the matrices (see Fig. 6).

\begin{figure}[h]
\begin{center}
    \begin{tikzpicture}[scale=0.65, every node/.style={scale=0.75}]
  \begin{scope}
        \node at (1.75,0.75) {$\displaystyle\left(\begin{array}{cc}
                                                5 & 2 \\
                                                2 & 1 
                                              \end{array}\right)$};
	\node at (-1.75,0.75){$\displaystyle\left(\begin{array}{cc}
                                                2 & 1 \\
                                                1 & 1 
                                              \end{array}\right)$};
	\node at (0,4) {$\displaystyle\left(\begin{array}{cc}
                                                12 & 5 \\
                                               7 & 3 
                                              \end{array}\right)$};
	\node at (3.11, 3.8) {$\displaystyle\left(\begin{array}{cc}
                                               70 & 29 \\
                                               41 & 17
                                              \end{array}\right)$};
	\node at (-3.11, 3.8) {$\displaystyle\left(\begin{array}{cc}
                                               31 & 13 \\
                                               19 & 8
                                              \end{array}\right)$};
                                                       \draw[thick] (0, -0.5) -- (0, 2);
		\draw[thick] (0,2) -- (1.73,3);
			\draw[thick] (1.73, 3) -- (1.99, 4.48);	
				\draw[thick] (1.99, 4.48) -- (1.31, 5.53);
				\draw[thick] (1.99, 4.48) -- (2.98, 5.24);	
			\draw[thick] (1.73, 3) -- (3.14, 2.49);
				\draw[thick] (3.14, 2.49) -- (4.29, 2.97);
				\draw[thick] (3.14, 2.49) -- (3.72, 1.38);
		\draw[thick] (0,2) -- (-1.73,3);
			\draw[thick] (-1.73, 3) -- (-1.99, 4.48);	
				\draw[thick] (-1.99, 4.48) -- (-1.31, 5.53);
				\draw[thick] (-1.99, 4.48) -- (-2.98, 5.24);	
			\draw[thick] (-1.73, 3) -- (-3.14, 2.49);
				\draw[thick] (-3.14, 2.49) -- (-4.29, 2.97);
				\draw[thick] (-3.14, 2.49) -- (-3.72, 1.38);
          \end{scope}
 \begin{scope}[xshift=10cm]
        \node at (-1.75,0.75) {$\displaystyle\left(\begin{array}{cc}
                                                5 & 2 \\
                                                2 & 1 
                                              \end{array}\right)$};
	\node at (1.75,0.75){$\displaystyle\left(\begin{array}{cc}
                                                2 & 1 \\
                                                1 & 1 
                                              \end{array}\right)$};
	\node at (0,4) {$\displaystyle\left(\begin{array}{cc}
                                                12 & 7 \\
                                               5 & 3 
                                              \end{array}\right)$};
	\node at (3.2, 3.8) {$\displaystyle\left(\begin{array}{cc}
                                               31 & 19 \\
                                               13 & 8
                                              \end{array}\right)$};
	\node at (-3.2, 3.8) {$\displaystyle\left(\begin{array}{cc}
                                               70 & 41 \\
                                               29 & 17
                                              \end{array}\right)$};
        \draw[thick] (0, -0.5) -- (0, 2);
		\draw[thick] (0,2) -- (1.73,3);
			\draw[thick] (1.73, 3) -- (1.99, 4.48);	
				\draw[thick] (1.99, 4.48) -- (1.31, 5.53);
				\draw[thick] (1.99, 4.48) -- (2.98, 5.24);	
			\draw[thick] (1.73, 3) -- (3.14, 2.49);
				\draw[thick] (3.14, 2.49) -- (4.29, 2.97);
				\draw[thick] (3.14, 2.49) -- (3.72, 1.38);
		\draw[thick] (0,2) -- (-1.73,3);
			\draw[thick] (-1.73, 3) -- (-1.99, 4.48);	
				\draw[thick] (-1.99, 4.48) -- (-1.31, 5.53);
				\draw[thick] (-1.99, 4.48) -- (-2.98, 5.24);	
			\draw[thick] (-1.73, 3) -- (-3.14, 2.49);
				\draw[thick] (-3.14, 2.49) -- (-4.29, 2.97);
				\draw[thick] (-3.14, 2.49) -- (-3.72, 1.38);
  \end{scope}
	\end{tikzpicture}
\caption{Conway topograph $\mathcal T_C(2)$ and its mirror image}
\label{fig:CFT}
\end{center}
\end{figure}

We can use it to describe the continued fraction expansion of the Markov fractions in the following way.\footnote{Boris Springborn informed me that the continued fraction expansion of the Markov fractions can be extracted also from the work \cite{CS} by Çanakçı and Schiffler in connection with the Markov snake graphs.
A relation between Springborn operation and concatenation of continued fractions was also observed in \cite{JPRT}.}

Note that there are two ways to write the continued fraction expansion of the rational numbers; we will choose the one with even numbers of partial quotients (e.g. $[1,1]$ rather than [2]).

Consider the mirror Conway topograph $\mathcal T_M^*$ of the representatives of Markov fractions in $[2, 5/2]$ shown on the left of Fig. 7.
On the right we have the Conway topograph $\mathcal T_{con}^*$ of the continued fractions constructed from the initial $[22]$, $[11]$ via the concatenation rule
$$
[a_1,\dots, a_{2m}]*[b_1,\dots, b_{2n}]=[a_1,\dots, a_{2m}, \, b_1,\dots, b_{2n}].
$$

\begin{figure}[h]
\begin{center}
    \begin{tikzpicture}[scale=0.65, every node/.style={scale=0.75}]
  \begin{scope}
        \node at (1.75,0.75) {$\displaystyle\frac{2}{1}$};
	\node at (-1.75,0.75){$\displaystyle\frac{5}{2}$};
	\node at (0,4) {$\displaystyle\frac{12}{5}$};
	\node at (3.11, 3.8) {$\displaystyle\frac{31}{13}$};
	\node at (-3.11, 3.8) {$\displaystyle\frac{70}{29}$};
                                                       \draw[thick] (0, -0.5) -- (0, 2);
		\draw[thick] (0,2) -- (1.73,3);
			\draw[thick] (1.73, 3) -- (1.99, 4.48);	
				\draw[thick] (1.99, 4.48) -- (1.31, 5.53);
				\draw[thick] (1.99, 4.48) -- (2.98, 5.24);	
			\draw[thick] (1.73, 3) -- (3.14, 2.49);
				\draw[thick] (3.14, 2.49) -- (4.29, 2.97);
				\draw[thick] (3.14, 2.49) -- (3.72, 1.38);
		\draw[thick] (0,2) -- (-1.73,3);
			\draw[thick] (-1.73, 3) -- (-1.99, 4.48);	
				\draw[thick] (-1.99, 4.48) -- (-1.31, 5.53);
				\draw[thick] (-1.99, 4.48) -- (-2.98, 5.24);	
			\draw[thick] (-1.73, 3) -- (-3.14, 2.49);
				\draw[thick] (-3.14, 2.49) -- (-4.29, 2.97);
				\draw[thick] (-3.14, 2.49) -- (-3.72, 1.38);
          \end{scope}
 \begin{scope}[xshift=10cm]
        \node at (-1.75,0.75) {$\displaystyle[2,2]$};
	\node at (1.75,0.75){$\displaystyle[1,1]$};
	\node at (0,4) {$\displaystyle[2,2,1,1]$};
	\node at (3.4, 3.8) {$\displaystyle[2,2,1,1,1,1]$};
	\node at (-3.4, 3.8) {$\displaystyle[2,2,2,2,1,1]$};
        \draw[thick] (0, -0.5) -- (0, 2);
		\draw[thick] (0,2) -- (1.73,3);
			\draw[thick] (1.73, 3) -- (1.99, 4.48);	
				\draw[thick] (1.99, 4.48) -- (1.31, 5.53);
				\draw[thick] (1.99, 4.48) -- (2.98, 5.24);	
			\draw[thick] (1.73, 3) -- (3.14, 2.49);
				\draw[thick] (3.14, 2.49) -- (4.29, 2.97);
				\draw[thick] (3.14, 2.49) -- (3.72, 1.38);
		\draw[thick] (0,2) -- (-1.73,3);
			\draw[thick] (-1.73, 3) -- (-1.99, 4.48);	
				\draw[thick] (-1.99, 4.48) -- (-1.31, 5.53);
				\draw[thick] (-1.99, 4.48) -- (-2.98, 5.24);	
			\draw[thick] (-1.73, 3) -- (-3.14, 2.49);
				\draw[thick] (-3.14, 2.49) -- (-4.29, 2.97);
				\draw[thick] (-3.14, 2.49) -- (-3.72, 1.38);
  \end{scope}
	\end{tikzpicture}
\caption{Conway topographs $\mathcal T_M^*$ and $\mathcal T_{con}^*$}
\label{fig:MFCF0}
\end{center}
\end{figure}

\begin{theorem}
\label{contfrac}
The continued fraction expansions of the Markov fractions in $[2, 5/2]$ is given by the juxtaposition of the Conway topographs $\mathcal T_M^*$ and $\mathcal T_{con}^*$.
\end{theorem}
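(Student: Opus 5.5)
The plan is to reduce everything to Theorem \ref{main} with $a=2$ and the standard matrix form of continued fractions. For an integer $k$ put $M(k)=\begin{pmatrix} k & 1\\ 1 & 0\end{pmatrix}$. I will use the classical identity
$$
M(a_1)M(a_2)\cdots M(a_n)=\begin{pmatrix} P_n & P_{n-1}\\ Q_n & Q_{n-1}\end{pmatrix},\qquad \frac{P_n}{Q_n}=[a_1,\dots,a_n],
$$
where $P_k/Q_k$ are the convergents. Define $\Phi([a_1,\dots,a_{2m}])=M(a_1)\cdots M(a_{2m})$. The concatenation rule then becomes ordinary matrix multiplication:
$$
\Phi(\alpha*\beta)=\Phi(\alpha)\Phi(\beta).
$$
The value of a continued fraction $\alpha$ is read off as the ratio of the $(1,1)$ and $(2,1)$ entries of $\Phi(\alpha)$.

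First I check the initial data. We have $\Phi([1,1])=M(1)^2=\begin{pmatrix}2&1\\1&1\end{pmatrix}=A(2)$ and $\Phi([2,2])=M(2)^2=\begin{pmatrix}5&2\\2&1\end{pmatrix}=B(2)$. Both matrices are symmetric, so they also coincide with their transposes in the mirror topograph $\mathcal T_C(2)^*$.

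Next I fix the local rule in $\mathcal T_C(2)$. From Fig.~6 (for example $A(2)B(2)=\begin{pmatrix}12&5\\7&3\end{pmatrix}$, and similarly for the entries $31/13$, $70/29$), the matrix in a new region equals the product (left neighbour)$\cdot$(right neighbour). Transposing reverses the order of the factors, and reflecting in the vertical line swaps left and right. Hence in $\mathcal T_C(2)^*$ the new matrix is again (left neighbour)$\cdot$(right neighbour), now built from the transposed matrices. This is the same local rule as for $\Phi$ on $\mathcal T_{con}^*$.

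By induction over the tree, starting from the two initial regions, every region of $\mathcal T_{con}^*$ labelled $\alpha$ carries $\Phi(\alpha)=C_t(2)^T$. Here $C_t(2)$ is the Cohn matrix of the mirrored region. Reading off values, $[\alpha]$ is the $(1,1)$ entry over the $(2,1)$ entry of $C_t(2)^T$, that is $a_t/m_t=I_t(2)$. By Theorem \ref{main}, $I_t(2)$ is the Markov fraction in $[2,5/2]$ sitting in the corresponding region of $\mathcal T_M^*$. This proves the juxtaposition claim. Since $m\ge1$, each $\alpha$ has even length, which matches the chosen normalization of the expansion.

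The main obstacle is the orientation bookkeeping. I must verify carefully that the product order in the Cohn tree (left times right, as in Fig.~6) survives the transposition combined with the mirror reflection. I must also check that the correspondence $t\mapsto$ region used in Theorem \ref{main} for $a=2$ is the one compatible with the mirrored picture $\mathcal T_M^*$. I would settle both points by checking the first two levels explicitly ($12/5$, $31/13$, $70/29$ against $[2,2,1,1]$, $[2,2,1,1,1,1]$, $[2,2,2,2,1,1]$) and then invoke the induction.
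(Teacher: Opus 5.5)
Your proposal is correct and is essentially the paper's proof. You match the initial transposed Cohn matrices with $M(2)^2$ and $M(1)^2$, use that concatenation becomes matrix multiplication along the Cohn recursion, and read off the value $a_t/m_t=I_t(2)=2+\mu(t)$ via Theorem \ref{main}. The only differences are minor: you apply Theorem \ref{main} at $a=2$ directly rather than through Aigner's normal form $a m_t+u_t$, and you spell out the orientation check (transposition reverses products, reflection swaps left and right), which the paper leaves implicit.
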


\begin{proof}
From Theorem 4.13 of Aigner \cite{Aigner} the Cohn matrix $C_t(a)$ has the form
$$
C_t(a)=\begin{pmatrix}
		a m_t+u_t & m_t \\ c_t & (3-a)m_t-u_t
	\end{pmatrix}
$$
for $m_t=q_t$ being the corresponding Markov number and some $u_t$ and $c_t$.
From our theorem 3.1 it follows that $u_t=p_t$ is the corresponding numerator of the Markov fraction $\mu(t)=\frac{p_t}{q_t}.$
This means that  the ratio of the matrix entries in the first column of 
$$
C_t(2)^\top=\begin{pmatrix}
		2 q_t+p_t & c_t \\ q_t & q_t-p_t
	\end{pmatrix}
$$
equals $\frac{2q_t+p_t}{q_t}=2+\mu(t).$ 
Note that the Markov fractions shifted by 2 are simply the representatives from the different fundamental domain $[2,5/2]$ of the group $\textrm{Aff}_1(\mathbb Z).$

Recall the well-known relation between $2\times 2$-matrices and continued fractions:
$$
M_{[c]}:=\begin{pmatrix}
		c_1 & 1 \\ 1 & 0
	\end{pmatrix}
\begin{pmatrix}
		c_2 & 1 \\ 1 & 0
	\end{pmatrix}
	\dots
	\begin{pmatrix}
		c_{k-1} & 1 \\ 1 & 0
	\end{pmatrix}
\begin{pmatrix}
		c_{k} & 1 \\ 1 & 0
	\end{pmatrix}
	=\begin{pmatrix}
		p_k & p_{k-1} \\ q_k & q_{k-1}
	\end{pmatrix},
$$
where $\frac{p_{k}}{q_{k}}=[c_1,\dots, c_{k}]$ and $\frac{p_{k-1}}{q_{k-1}}=[c_1,\dots, c_{k-1}]$  are the corresponding continued fraction expansions.
The key observation now is that the initial Cohn matrices can be written as
$$
	\begin{pmatrix}
		5 & 2 \\ 2 & 1
	\end{pmatrix}
	=\begin{pmatrix}
		2 & 1 \\ 1 & 0
	\end{pmatrix}
\begin{pmatrix}
		2 & 1 \\ 1 & 0
	\end{pmatrix},
	\quad
	\begin{pmatrix}
		2 & 1 \\ 1 & 1
	\end{pmatrix}
	=\begin{pmatrix}
		1 & 1 \\ 1 & 0
	\end{pmatrix}
\begin{pmatrix}
		1 & 1 \\ 1 & 0
	\end{pmatrix}.
$$

Now the claim follows from the recursive construction of the Cohn matrices and the obvious relation
$
M_{[a]} M_{[b]} =M_{[a]*[b]}.
$
\end{proof}

It is interesting that the  periodic versions of the corresponding continued fractions describe the {\it Markov irrationalities} \begin{equation}
\label{Markovirr}
\gamma\left(\frac{p}{q}\right)=\frac{2p+q+\sqrt{9q^2-4}}{2q},
\end{equation} 
which are special representatives of the quadratic irrationals with Markov constant larger than $\frac{1}{3}$ (see Chapter 2 in \cite{Aigner} and section 4 in \cite{SV1}). 

\begin{figure}[h]
\begin{center}
    \begin{tikzpicture}[scale=0.65, every node/.style={scale=0.75}]
  \begin{scope}
        \node at (1.75,0.75) {$\displaystyle\frac{1+\sqrt{5}}{2}$};
	\node at (-1.75,0.75){$\displaystyle 1+\sqrt 2$};
	\node at (0,4) {$\displaystyle\frac{9+\sqrt{221}}{10}$};
	\node at (3.3, 3.8) {$\displaystyle\frac{23+\sqrt{1517}}{26}$};
	\node at (-3.3, 3.8) {$\displaystyle\frac{53+\sqrt{7565}}{58}$};
                                                       \draw[thick] (0, -0.5) -- (0, 2);
		\draw[thick] (0,2) -- (1.73,3);
			\draw[thick] (1.73, 3) -- (1.99, 4.48);	
				\draw[thick] (1.99, 4.48) -- (1.31, 5.53);
				\draw[thick] (1.99, 4.48) -- (2.98, 5.24);	
			\draw[thick] (1.73, 3) -- (3.14, 2.49);
				\draw[thick] (3.14, 2.49) -- (4.29, 2.97);
				\draw[thick] (3.14, 2.49) -- (3.72, 1.38);
		\draw[thick] (0,2) -- (-1.73,3);
			\draw[thick] (-1.73, 3) -- (-1.99, 4.48);	
				\draw[thick] (-1.99, 4.48) -- (-1.31, 5.53);
				\draw[thick] (-1.99, 4.48) -- (-2.98, 5.24);	
			\draw[thick] (-1.73, 3) -- (-3.14, 2.49);
				\draw[thick] (-3.14, 2.49) -- (-4.29, 2.97);
				\draw[thick] (-3.14, 2.49) -- (-3.72, 1.38);
          \end{scope}
 \begin{scope}[xshift=10cm]
        \node at (-1.75,0.75) {$\displaystyle[\overline{2,2}]$};
	\node at (1.75,0.75){$\displaystyle[\overline{1,1}]$};
	\node at (0,4) {$\displaystyle[\overline{2,2,1,1}]$};
	\node at (3.4, 3.8) {$\displaystyle[\overline{2,2,1,1,1,1}]$};
	\node at (-3.4, 3.8) {$\displaystyle[\overline{2,2,2,2,1,1}]$};
        \draw[thick] (0, -0.5) -- (0, 2);
		\draw[thick] (0,2) -- (1.73,3);
			\draw[thick] (1.73, 3) -- (1.99, 4.48);	
				\draw[thick] (1.99, 4.48) -- (1.31, 5.53);
				\draw[thick] (1.99, 4.48) -- (2.98, 5.24);	
			\draw[thick] (1.73, 3) -- (3.14, 2.49);
				\draw[thick] (3.14, 2.49) -- (4.29, 2.97);
				\draw[thick] (3.14, 2.49) -- (3.72, 1.38);
		\draw[thick] (0,2) -- (-1.73,3);
			\draw[thick] (-1.73, 3) -- (-1.99, 4.48);	
				\draw[thick] (-1.99, 4.48) -- (-1.31, 5.53);
				\draw[thick] (-1.99, 4.48) -- (-2.98, 5.24);	
			\draw[thick] (-1.73, 3) -- (-3.14, 2.49);
				\draw[thick] (-3.14, 2.49) -- (-4.29, 2.97);
				\draw[thick] (-3.14, 2.49) -- (-3.72, 1.38);
  \end{scope}
	\end{tikzpicture}
\caption{Markov irrationalities and their continued fraction expansions}
\label{fig:MFCF1}
\end{center}
\end{figure}

They are the limits of the corresponding {\it left companions} $\gamma_m^-\left(\frac{p}{q}\right)$ of the Markov fraction $\frac{p}{q}\in [2, 5/2]$, defined by Springborn \cite{Springborn}:
$$
\lim_{m\to\infty}\gamma_m^-\left(\frac{p}{q}\right)=\gamma\left(\frac{p}{q}\right).
$$
If $\frac{p}{q}=\gamma_1^\pm\left(\frac{p}{q}\right)=[c_1,\dots, c_{2n}]$ is the continued fraction expansion described above, then the continued fraction expansion of the left companions are
$
\gamma_m^-\left(\frac{p}{q}\right)=[c_1,\dots, c_{2n}]^m,
$
where the power is understood in the sense of concatenation, for example
$$
\gamma_2^-\left(\frac{p}{q}\right)=[c_1,\dots, c_{2n}]^2=[c_1,\dots, c_{2n}, c_1,\dots, c_{2n}].
$$
The continued fraction expansion of the right companions $\gamma_m^+\left(\frac{p}{q}\right)$ and the corresponding Markov irrationalities
looks a bit more involved and needs further analysis, which should be compared with Markov's calculations in \cite{Markov}, geometric approach of Springborn \cite{Springborn} and combinatorial description of Çanakçı and Schiffler \cite{CS}.

\section{$q$-deformed Markov fractions and quantum Cohn matrices}

In 2019 Morier-Genoud and Ovsienko \cite{MGO} introduced a remarkable $q$-deformation of the rational numbers, which surprisingly had not been considered before. 

For a rational $x=\frac{r}{s}>0$ 
the $q$-deformation is a rational function
$
  \left[ \frac{r}s \right]_q = \frac{\mathcal{R}(q)}{\mathcal{S}(q)},
$
where~$\mathcal{R}(q)$ and~$\mathcal{S}(q)$ are certain coprime monic polynomials with positive integer coefficients
both depending on~$r$ and~$s$, which are defined by a certain recursive procedure in terms of the corresponding continued fraction expansion of $x.$

One can visualize these quantum rationals on the Conway topograph \cite{Conway} using the quantized local rules shown in Fig.~3 (taken from \cite{EVW}).
\medskip

\begin{figure}[h]
\begin{center}
	\begin{tikzpicture}
	\node at (-1, 0.75) {$\frac{\mathcal{R}}{\mathcal{S}}$};
	\node at (1, 0.75) {$\frac{\mathcal{R'}}{\mathcal{S'}}$};
	\node at (0, 2.5) {$\frac{\mathcal{R}+q^\ell\mathcal{R'}}{\mathcal{S}+q^\ell\mathcal{S'}}$};
	
	\draw[thick] (0, 0) -- node[right, text=red] {$q^\ell$} (0, 1.5);
	\draw[thick] (0, 1.5) -- (1, 2.5);
	\draw[thick] (0, 1.5) -- (-1, 2.5);
	\end{tikzpicture}
	\quad \quad \quad
	\begin{tikzpicture}
	\node at (-1, 0.75) {$\frac{\mathcal{R}}{\mathcal{S}}$};
	\node at (1, 0.75) {$\frac{\mathcal{R'}}{\mathcal{S'}}$};
	
	\draw[thick] (0, 0) -- node[right, text=red] {$q^\ell$} (0, 1.5);
	\draw[thick] (0, 1.5) -- node[below, right, text=red] {$q^{\ell+1}$} (1, 2.5);
	\draw[thick] (0, 1.5) -- node[above, right, text=red] {$q$} (-1, 2.5);
	\end{tikzpicture}
  \end{center}
\medskip
  \begin{center}
	\begin{tikzpicture} 
    \node at (1.75,0.75) {$\frac{1}{0}$};
	\node at (-1.75,0.75){$\frac{0}{1}$};
	\node at (0,4) {$\frac{1}{1}$};
	\node at (3.11, 3.8) {$\frac{1+q}{1}$};
	\node at (2.30, 5.76) {$\frac{1+q+q^2}{1+q}$}; 
	\node at (4.37, 1.98) {$\frac{1+q+q^2}{1}$}; 
	\node at (0.87, 6.97) {$\frac{1+q+q^2+q^3}{1+q+q^2}$}; 
	\node at (4.38, 5.85) {$\frac{1+q+2q^2+q^3}{1+q+q^2}$}; 
	\node at (5.81, 3.35) {$\frac{1+2q+q^2+q^3}{1+q}$}; 
	\node at (4.43, 0.34) {$\frac{1+q+q^2+q^3}{1}$}; 
	\node at (-3.11, 3.8) {$\frac{q}{1+q}$};
	\node at (-2.10, 5.76) {$\frac{q+q^2}{1+q+q^2}$}; 
	\node at (-4.37, 2.18) {$\frac{q^2}{1+q+q^2}$}; 
	\node at (-0.87, 6.97) {$\frac{q+q^2+q^3}{1+q+q^2+q^3}$}; 
	\node at (-4.38, 5.85) {$\frac{q+q^2+q^3}{1+2q+q^2+q^3}$}; 
	\node at (-5.81, 3.35) {$\frac{q^2+q^3}{1+q+2q^2+q^3}$}; 
	\node at (-4.43, 0.34) {$\frac{q^3}{1+q+q^2+q^3}$}; 
	
	\draw[thick] (0, -0.5) -- node[right, text=red] {\small $1$} (0,2);
	
		\draw[thick] (0,2) -- node[below, text=red] {\small $q$} (1.73,3);
			
			\draw[thick] (1.73, 3) -- node[right, text=red] {\small $q$} (1.99, 4.48);	
	
				\draw[thick] (1.99, 4.48) -- node[above, text=red] {\small $q$} (1.31, 5.53);
				
					\draw[thick] (1.31, 5.53) -- node[above, text=red] {\small $q$} (0.36, 5.87);
					\draw[thick] (1.31, 5.53) -- node[right, text=red] {\small $q^2$} (1.23, 6.53);
					
				\draw[thick] (1.99, 4.48) -- node[below, text=red] {\small $q^2$} (2.98, 5.24);	
				
					\draw[thick] (2.98, 5.24) -- node[right, text=red] {\small $q$} (3.24, 6.21);
					\draw[thick] (2.98, 5.24) -- node[below, text=red] {\small $q^3$} (3.98, 5.24);
	
			\draw[thick] (1.73, 3) -- node[below, text=red] {\small $q^2$} (3.14, 2.49);
	
				\draw[thick] (3.14, 2.49) -- node[below, text=red] {\small $q$} (4.29, 2.97);
	
					\draw[thick] (4.29, 2.97) -- node[right, text=red] {\small $q$} (4.79, 3.84);
					\draw[thick] (4.29, 2.97) -- node[below, text=red] {\small $q^2$} (5.26, 2.71);
				
				\draw[thick] (3.14, 2.49) -- node[left, text=red] {\small $q^3$} (3.72, 1.38);
					
					\draw[thick] (3.72, 1.38) -- node[below, text=red] {\small $q$} (4.63, 0.96);
					\draw[thick] (3.72, 1.38) -- node[left, text=red] {\small $q^4$} (3.55, 0.39);
					
		\draw[thick] (0,2) -- node[above, text=red] {\small $q$} (-1.73,3);
		
			\draw[thick] (-1.73, 3) -- node[right, text=red] {\small $q^2$} (-1.99, 4.48);	
		
				\draw[thick] (-1.99, 4.48) -- node[right, text=red] {\small $q^3$} (-1.31, 5.53);
		
					\draw[thick] (-1.31, 5.53) -- node[below, text=red] {\small $q^4$} (-0.36, 5.87);
					\draw[thick] (-1.31, 5.53) -- node[right, text=red] {\small $q$} (-1.23, 6.53);
		
				\draw[thick] (-1.99, 4.48) -- node[above, text=red] {\small $q$} (-2.98, 5.24);	
		
					\draw[thick] (-2.98, 5.24) -- node[right, text=red] {\small $q^2$} (-3.24, 6.21);
					\draw[thick] (-2.98, 5.24) -- node[above, text=red] {\small $q$} (-3.98, 5.24);
		
			\draw[thick] (-1.73, 3) -- node[above, text=red] {\small $q$} (-3.14, 2.49);
		
				\draw[thick] (-3.14, 2.49) -- node[above, text=red] {\small $q^2$} (-4.29, 2.97);
		
					\draw[thick] (-4.29, 2.97) -- node[right, text=red] {\small $q^3$} (-4.79, 3.84);
					\draw[thick] (-4.29, 2.97) -- node[above, text=red] {\small $q$} (-5.26, 2.71);
		
				\draw[thick] (-3.14, 2.49) -- node[left, text=red] {\small $q$} (-3.72, 1.38);
		
					\draw[thick] (-3.72, 1.38) -- node[above, text=red] {\small $q^2$} (-4.63, 0.96);
					\draw[thick] (-3.72, 1.38) -- node[left, text=red] {\small $q$} (-3.55, 0.39);
	\end{tikzpicture}

\end{center}
\caption{Local rules and the quantized Conway-Farey Topograph} \label{fig:QFCT}
\end{figure}

Evans, Jouteur, Morier-Genoud, Ovsienko \cite{EJMGO} introduced the following $q$-version of Cohn matrices, starting with
\begin{equation}
\label{Cohn_q}
A(a)_q = \begin{pmatrix}  q^{2-a}[a]_q & q^{1-a} \\ [a]_q[3-a]_q-q^{a-1} & q^{-1}[3-a]_q \end{pmatrix}, \qquad
B(a)_q =A(a)_qA(a+1)_q.
\end{equation}
In particular, for $a=0$ we have
$$
A(0)_q = \begin{pmatrix}  0 & q \\ -q^{-1} & q^{-1}+1+q \end{pmatrix}, \qquad
B(0)_q = \begin{pmatrix} q^2 & 1+q \\ q+q^2 & q^{-2}+q^{-1}+2+q \end{pmatrix}.
$$
Let 
\begin{equation}
	\label{C_t^q}
	C_t^q(a)= \begin{pmatrix}
		a_t^q & b_t^q\\ c_t^q & d_t^q
	\end{pmatrix} 
\end{equation}
be the $q$-deformed Cohn matrix corresponding to $t\in \mathbb Q_{[0,1]}]$ with $$C_{\frac{0}{1}}(a)= A(a)_q, \,\, C_{\frac{1}{1}}(a)= B(a)_q$$ 
and define its index by the Aigner formula
\begin{equation}
	\label{indexq}
	I_t^q(a)= \frac{a_t^q}{b_t^q}.
\end{equation}

Modifying the previous analysis and using the results from \cite{MGO}, we have the following quantum version of Theorem 3.1.

\begin{theorem}
$q$-deformed Markov fractions in the fundamental domain $[a, a+\frac{1}{2}]$ are the (scaled by $q$) indices $I_t^q(a)$ of the corresponding quantum Cohn matrices:
$$
\mu(t)_q=q^{-1}I_t^q(a).
$$
\end{theorem}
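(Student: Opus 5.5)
Since the paper has not yet written down a $q$-Springborn mediant, I will read $\mu(t)_q$ as the Morier-Genoud--Ovsienko quantization $\left[\frac{p_t}{q_t}\right]_q$ of the Markov fraction $\mu(t)$. The plan is to mirror the proof of Theorem~\ref{main}.

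\emph{Reduction to one value of $a$.} By the definition (\ref{Cohn_q}), shifting $a\mapsto a+1$ is a conjugation/renormalisation of the generators by the $q$-translation matrix $T_q=\begin{pmatrix} q & 1\\ 0& 1\end{pmatrix}$. On $q$-rationals this matrix acts by $[x+1]_q=q[x]_q+1$. I would first check directly that $A(a+1)_q$ is obtained from $A(a)_q$ by this conjugation, possibly up to a scalar power of $q$. The same then holds for $B(a)_q=A(a)_qA(a+1)_q$, and hence for every product in the Cohn tree. Next I would verify that the conjugation transforms the first row $(a_t^q,b_t^q)$ so that $q^{-1}a^q_t/b^q_t$ undergoes exactly the $q$-shift $x\mapsto qx+1$. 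With this, it suffices to treat $a=0$ and the fundamental domain $[0,\frac12]$.

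\emph{Base case.} For $a=0$ the base cases are immediate. From $A(0)_q$ we get $q^{-1}\cdot 0/q=0=[0]_q$. From $B(0)_q$ we get $q^{-1}q^2/(1+q)=q/(1+q)=[1/2]_q$, in agreement with Fig.~\ref{fig:QFCT}.

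\emph{Induction step.} The induction runs exactly as in Theorem~\ref{main}. Each $C^q_t$ has $\det=1$, being a product of $A(0)_q$'s, which have determinant $1$. By the Markov trace identity, $\tr C^q_t$ is a known $q$-deformed Markov number (Kogiso's $q$-Markov numbers). Hence, given $(a^q_t,b^q_t)$, the remaining entries $c^q_t,d^q_t$ are determined, just as $c_k,d_k$ were determined before. The step $C_{1'}=C_3C_2$ then reduces to the scalar identity
\[
a_3a_2+b_3c_2=a_{1'},\qquad c_2=\frac{a_2(\tr C_2-a_2)-1}{b_2}.
\]
The main obstacle is supplying the $q$-analogues of the relations (\ref{rel12})--(\ref{rel1'}). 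These are the statements that the numerators and denominators $\mathcal R_k,\mathcal S_k$ of neighbouring $q$-Markov fractions satisfy $\mathcal R_2\mathcal S_3-q^{\ell}\mathcal R_3\mathcal S_2=q^{m}\mathcal S_1$ together with a $q$-Springborn rule for $\mathcal R_{1'}$.

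I would obtain these not by brute algebra but from \cite{MGO}. There, $[x]_q$ is the ratio of the first-column entries of the product of the $q$-matrices $\begin{pmatrix}[c]_q& q^{c}\\ 1&0\end{pmatrix}$ (up to normalisation) attached to the continued fraction of $x$. By Theorem~\ref{contfrac} and its proof, the Cohn matrices (transposed, at $a=2$) are precisely such products of $M_{[c]}$ with even-length concatenated expansions. So I would show that the transposed $q$-Cohn matrix $C^q_t(2)^\top$ equals, up to a power of $q$, the MGO matrix $M^q_{[c]}$ of the continued fraction of $2+\mu(t)$. First, this needs to be checked on the generators, where $A(2)_q^\top$ and $B(2)_q^\top$ should factor as $M^q_{[2]}M^q_{[2]}$ and $M^q_{[1]}M^q_{[1]}$ up to scalars; this is a direct check. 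Second, it needs multiplicativity under concatenation, which holds for MGO matrices.

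Reading off the first column then gives $q^{-1}I^q_t(2)=[2+\mu(t)]_q$, and the reduction above transports this to arbitrary $a$. The delicate point I expect is bookkeeping of the powers of $q$. Transposition and the $q$-shift each introduce scalar factors, and I must verify that these combine to precisely the single factor $q^{-1}$ in the statement. I would fix this by matching the base cases $t=0,1$ and by noting that the powers of $q$ multiply consistently under products.
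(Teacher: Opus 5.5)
The paper gives only a one-line sketch (``modifying the previous analysis and using \cite{MGO}''). Your overall plan is compatible with it: reduce to a single $a$ by a $q$-shift, then identify the $a=2$ quantum Cohn matrices with Morier-Genoud--Ovsienko matrices and invoke the concatenation of Theorem~\ref{contfrac}. However, there is a genuine gap. Your central step fails, and with it your explanation of the factor $q^{-1}$, which is the only genuinely quantum content of the statement.

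\textbf{The central step fails.} Take $R_q=\begin{pmatrix} q&1\\0&1\end{pmatrix}$ and $L_q=\begin{pmatrix} q&0\\q&1\end{pmatrix}$. Then the first-column ratio of $R_q^{c_1}L_q^{c_2}\cdots R_q^{c_{2m-1}}L_q^{c_{2m}}$ is $[c_1,\dots,c_{2m}]_q$, and a direct computation gives $A(2)_q=q^{-1}R_qL_q$ and $B(2)_q=q^{-2}R_q^2L_q^2$.
\begin{itemize}
\item It is the \emph{untransposed} matrices that are MGO matrices. Also, $A(2)$ goes with $[1,1]$ and $B(2)$ with $[2,2]$, not the other way round as you wrote.
\item The transposes are not proportional to any such matrix. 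For example, $A(2)_q^\top=\begin{pmatrix} 1+q&1\\ q^{-1}&q^{-1}\end{pmatrix}$, whereas $R_qL_q=\begin{pmatrix} q^2+q&1\\ q&1\end{pmatrix}$. The reason is that $R_q^\top\neq L_q$ once $q\neq 1$.
\item Even if $C^q_t(2)^\top$ were a scalar multiple of the MGO matrix of $2+\mu(t)$, its first column is $(a^q_t,b^q_t)$. Reading it off would give $I^q_t(2)=[2+\mu(t)]_q$ with no factor $q^{-1}$, which already contradicts $I^q_0(2)=q(1+q)=q[2]_q$.
\item No bookkeeping of scalar powers of $q$ can repair this, because a ratio of two matrix entries is invariant under rescaling the matrix.
\end{itemize}

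\textbf{What is missing.} You need the non-scalar twist relating transposition to word reversal. With $D=\mathrm{diag}(1,q)$ one has $R_q^\top=D^{-1}L_qD$ and $L_q^\top=D^{-1}R_qD$. Hence
$(R_q^{c_1}L_q^{c_2}\cdots L_q^{c_{2m}})^\top=D^{-1}R_q^{c_{2m}}L_q^{c_{2m-1}}\cdots L_q^{c_1}D$.
\begin{itemize}
\item $C^q_t(2)$ is a scalar times a word in the blocks $R_qL_q$ and $R_q^2L_q^2$.
\item So its transpose is, up to a scalar, $D^{-1}\widetilde M D$. Here $\widetilde M$ is the MGO matrix of the reversed (mirrored) concatenation, which by Theorem~\ref{contfrac} is the continued fraction of $2+\mu(t)$.
\item The conjugation $X\mapsto D^{-1}XD$ multiplies the first-column ratio by $q$. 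This gives $I^q_t(2)=q\,[2+\mu(t)]_q$, and that is exactly where $q^{-1}$ comes from.
\end{itemize}

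\textbf{The reduction step has the same confusion.} Conjugation by your $T_q$ (which is $R_q$) does not send $A(a)_q$ to $A(a+1)_q$. The correct identity is $A(a+1)_q=L_q^{-1}A(a)_qL_q$, exactly and with no scalar. Both sides have first row $(q^{1-a}[a+1]_q,\,q^{-a})$, trace $q^{-1}[3]_q$ and determinant $1$. This conjugation acts on the index by $x\mapsto q(x+1)$, i.e. $q^{-1}I\mapsto q\,(q^{-1}I)+1$, matching $[x+1]_q=q[x]_q+1$.

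\textbf{The induction paragraph cannot stand alone.} It leaves the $q$-analogues of (\ref{rel12})--(\ref{rel1'}) unproved, so it cannot carry the argument by itself. Once the MGO identification is done correctly, it is unnecessary.
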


In particular,
$$
\left[ \frac{1}2 \right]_q=\frac{q}{1+q}, \left[ \frac{2}5 \right]_q=\frac{q^2+q^3}{1+q+2q^2+q^3}, \left[ \frac{5}{13} \right]_q=\frac{q^2+q^3+2q^4+q^5}{1+2q+3q^2+3q^3+3q^4+q^5}.
$$
Note that the denominators are not the quantum Markov numbers defined in \cite{EJMGO, Oguz}, which we will discuss in the next section.

\section{Geometry of quantum Markov numbers}

The usual Markov numbers appear both as off-diagonal entries and as one-third of the trace of the Cohn matrices \cite{Aigner}.
We will use the following definition of $q$-Markov numbers from \cite{EJMGO,Oguz} based on the trace of quantum Cohn matrices: 
\begin{equation}
\label{qMar}
m_t^q:= \frac{q \, tr \, C_t^q(a)}{[3]_q},
\end{equation}
$$
2_q=q^{-1}+q, \, 5_q=q^{-2}+q^{-1}+1+q+q^2, \,13_q=q^{-3}+2q^{-2}+2q^{-1}+3+2q+2q^2+q^3.
$$
These numbers are independent of $a$ and satisfy the {\it $q$-Markov equation} introduced in \cite{Kogiso}
\begin{equation}
\label{qMarK}
x^2+y^2+z^2=q^{-1}[3]_qxyz+(q-1)(q^{-1}-1).
\end{equation}

To explain their geometric meaning recall that the original Cohn matrices generate the commutator subgroup $G=\Gamma'$ of $\Gamma=SL(2,\mathbb Z)$, which is a free group with two generators $A$ and $B$ given by (\ref{Cohn_1}) (see \cite{Aigner}).

Consider now its deformation $G_q(a)$ generated by the quantized Cohn matrices (\ref{Cohn_q}). From the results of \cite{MGOV,Scherich} it follows that for real positive $q\in \mathbb R$ this is a discrete subgroup $G_q(a) \subset SL(2,\mathbb R).$ 

\begin{theorem}
The corresponding quotient of the hyperbolic plane
$
T_q=\mathbb H/G_q(a)
$
is the hyperbolic torus with 3-fold symmetry and the hole of length $l_q$, where
\begin{equation}
	\label{lengthq}
	\cosh\, \frac{l_q}{2}=1+\frac{1}{2q^3}[3]_q^2(q-1)^2.
\end{equation}
The $q$-Markov numbers are 
\begin{equation}
	\label{lengths}
	m_t^q=\frac{2q}{[3]_q}\cosh\, \frac{l_t}{2},
	\end{equation}
	 where $l_t$ are the lengths of the corresponding closed geodesics on $T_q$.
\end{theorem}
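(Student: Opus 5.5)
We plan to reduce the statement to the classical Fricke trace identities for a once-punctured/holed torus group, applied to the free group $G_q(a)=\langle A(a)_q, B(a)_q\rangle$.

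\emph{Independence of $a$.} We first check that $A(a+1)_q$ is conjugate to $A(a)_q$ by a fixed element, most likely a $q$-deformed translation $T_q=\begin{pmatrix} q & 1\\ 0 & 1\end{pmatrix}$ suitably normalised in $SL(2,\mathbb R)$. Together with $B(a)_q=A(a)_qA(a+1)_q$, this shows that the groups $G_q(a)$ are all conjugate, so the quotient $T_q$ does not depend on $a$. We may therefore work with $a=0$ and the explicit matrices displayed above.

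\emph{Traces and the commutator.} Set $x=\tr A(0)_q$, $y=\tr B(0)_q$ and $z=\tr A(0)_qB(0)_q$. From the formulas:
\begin{itemize}
\item $x=q^{-1}[3]_q$;
\item $y=q^{-2}+q^{-1}+2+q+q^2=q^{-1}[3]_q\cdot 2_q$, consistent with (\ref{qMar});
\item $z$ is computed directly and equals $q^{-1}[3]_q\, 5_q$.
\end{itemize}
The Fricke identity
$$\tr[A,B]=x^2+y^2+z^2-xyz-2$$
combined with the $q$-Markov equation (\ref{qMarK}), after substituting $x=q^{-1}[3]_q m_1$ and so on, gives $\tr[A,B]$ in closed form. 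Concretely, if $x_i=\lambda m_i$ with $\lambda=q^{-1}[3]_q$, then
$$x^2+y^2+z^2-xyz=\lambda^2\bigl(m_1^2+m_2^2+m_3^2-\lambda m_1m_2m_3\bigr)=\lambda^2(q-1)(q^{-1}-1).$$
Hence
$$\tr[A,B]=-2-\frac{[3]_q^2(q-1)^2}{q^3}.$$
Since $\tr[A,B]<-2$, the commutator is hyperbolic, and $|\tr[A,B]|=2\cosh(l_q/2)$ yields exactly (\ref{lengthq}). The same Fricke relation is preserved by the Markov/Vieta moves $(A,B)\mapsto(A,AB)$, $(AB,B)$. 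These moves generate the Cohn tree, so for every $t$ the triple of traces satisfies the same equation, and $\tr C_t^q=\lambda m_t^q$ by definition.

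\emph{Geometric identification.} By \cite{MGOV,Scherich}, $G_q(a)$ is discrete and free of rank 2. A free rank-two discrete subgroup of $SL(2,\mathbb R)$ whose generators $A,B$ have hyperbolic commutator with $\tr[A,B]<-2$ is the holonomy of a one-holed torus: this is the classical theorem of Keen / Goldman on Fricke space. Its boundary geodesic is represented by $[A,B]$, which gives the hole length $l_q$. The elements $C_t^q$ in the Cohn tree correspond to primitive elements, that is, to simple closed geodesics. Each $C_t^q$ is hyperbolic with positive trace, so $\tr C_t^q=2\cosh(l_t/2)$, which rearranges to (\ref{lengths}).

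\emph{3-fold symmetry.} The conjugation of the first step, or equivalently the automorphism $A\mapsto B$, $B\mapsto (AB)^{-1}$ up to sign, permutes the three traces, since $x=\lambda\cdot 1$ corresponds to the triple with $m=1$. The triple $(1_q,2_q,5_q)$ is not symmetric, however, so the symmetry must be realised differently. We would look for an order-3 element of $PSL(2,\mathbb R)$ normalising $G_q$; the $q$-deformed $\begin{pmatrix}0&-1\\1&1\end{pmatrix}$ used by Morier-Genoud–Ovsienko is the first candidate to try.

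We expect this symmetry claim, together with rigorously invoking the one-holed torus characterisation (in particular, that the $C_t^q$ are simple), to be the main obstacle. The trace computations are routine.
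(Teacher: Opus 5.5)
Your derivation of the hole length and of $m_t^q=\frac{2q}{[3]_q}\cosh\frac{l_t}{2}$ follows the paper's argument: the Fricke identity, the commutator trace, and $\tr C=2\cosh\frac l2$. Your value $\tr[A,B]=-2-q^{-3}[3]_q^2(q-1)^2=-(q^3+q^{-3})$ is correct and matches the paper's final formula for $l_q$. (The paper's intermediate line has $[3]_q$ where $[3]_q^2$ is meant.) Getting it from the $q$-Markov equation on your computed triple $(\lambda,\lambda 2_q,\lambda 5_q)$, rather than multiplying out the commutator, is fine.

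The genuine gap is the 3-fold symmetry, and your reason for dropping the trace-permutation argument is mistaken. You do not need $(A,B)$ itself to have a symmetric trace triple, only some generating pair, and one sits just below the initial triple in the tree:
\begin{itemize}
\item $\tr(B^{-1}A)=xy-z=\lambda(\lambda 2_q-5_q)=\lambda$;
\item $\tr(AB^{-1}A)=\lambda\tr(B^{-1}A)-\tr B=\lambda^2-\lambda 2_q=\lambda$.
\end{itemize}
So $X=A$, $Y=B^{-1}A$ generate $G_q(a)$ with $\tr X=\tr Y=\tr XY=\lambda$. This is the $q$-Markov triple $(1,1,1)$; check $3=\lambda+(q-1)(q^{-1}-1)$.

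The automorphism $\phi\colon X\mapsto Y$, $Y\mapsto (XY)^{-1}$ satisfies $\phi^3=\mathrm{id}$ and permutes $(\tr X,\tr Y,\tr XY)$ cyclically. Hence $\rho$ and $\rho\circ\phi$ have the same character. Since $\tr[X,Y]\neq 2$, $\rho$ is irreducible, so there is $g\in PGL(2,\mathbb R)$ with $g\rho(\gamma)g^{-1}=\rho(\phi(\gamma))$. Then $g^3$ centralises the non-elementary group $G_q(a)$, so $g^3=1$, which rules out orientation-reversing $g$. Thus $g$ is an order-3 elliptic element normalising $G_q(a)$, and it descends to the order-3 rotation of $T_q$. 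You do not need to search for a $q$-deformed $\begin{pmatrix}0&-1\\1&1\end{pmatrix}$. The paper's own proof does not address the symmetry either, so this argument is the missing piece in both.

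A smaller correction concerns independence of $a$. The conjugator cannot be upper triangular: conjugation by an upper-triangular element of $SL(2,\mathbb R)$ multiplies the $(2,1)$ entry by a positive number, but that entry must go from $-q^{-1}$ in $A(0)_q$ to $q$ in $A(1)_q$. The right conjugator is the lower-triangular $P=\begin{pmatrix}1&0\\1&q^{-1}\end{pmatrix}$, the $q$-analogue of $\begin{pmatrix}1&0\\1&1\end{pmatrix}$. Using $[a+1]_q=1+q[a]_q$, $\det A(a)_q=1$ and $\tr A(a)_q=q^{-1}[3]_q$ for all $a$, one checks $P^{-1}A(a)_qP=A(a+1)_q$, hence $P^{-1}G_q(a)P=G_q(a+1)$.

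Finally, once you invoke the Fricke-space theorem you do not need \cite{MGOV,Scherich} for discreteness. For $q\neq 1$, the inequality $\tr[A,B]<-2$ already forces $\langle A,B\rangle$ to be a discrete free one-holed-torus group.
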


\begin{proof}
Note that in the classical case $q=1$ it is well known that the corresponding quotient is the so-called Markov hyperbolic punctured torus and the claim is known since the work of Cohn and Gorshkov \cite{Cohn, Gorshkov} (see more details in \cite{Aigner}).

The proof is based on the classical {\it Fricke identity}: for any $A,B \in SL_2(\mathbb R), C=AB$ 
\begin{equation}
\label{fricke}
(\mathrm{tr}\, A)^2+(\mathrm{tr}\, B)^2+(\mathrm{tr}\, C)^2=\mathrm{tr}\, A \, \mathrm{tr}\, B\, \mathrm{tr}\,C +\mathrm{tr}\, (ABA^{-1}B^{-1})+2.
\end{equation}
The puncture condition means that the commutator of the generators is a parabolic element with 
$$
\mathrm{tr}\, (ABA^{-1}B^{-1})=-2,
$$
so  (\ref{fricke}) implies that the corresponding $X=\mathrm{tr}\, A,\, Y=\mathrm{tr}\, B,\, Z=\mathrm{tr}\, C$ satisfy the  real version of the Markov equation
\begin{equation}
\label{xyz}	
X^2 + Y^2 + Z^2=XYZ,\quad X,Y,Z \in \mathbb R.
\end{equation}
One can show that the positive component of this surface with $X,Y,Z>0$ is the Teichm\"uller space of the punctured tori (see \cite{Keen}).
The Markov numbers are related to the lengths $l$ of simple closed geodesics by
$m=\frac{2}{3} \cosh \frac{l}{2}$ (see \cite{Aigner,Goldman,Haas}).
  
In the quantum case one can check that 
$$
\mathrm{tr}\, (ABA^{-1}B^{-1})=-q^{-3}[4]_q^2+2q^{-2}[3]_q^2-q^{-1}[2]_q^2,
$$
so
$$
\mathrm{tr}\, (ABA^{-1}B^{-1})+2=q^{-2}[3]_q(q-1)(q^{-1}-1),
$$
leading to the $q$-Markov equation (\ref{qMarK}) for the corresponding $q$-Markov numbers (\ref{qMar}).

The length $l$ of the closed geodesic corresponding to matrix $C \in G_q$ is given by the standard formula
$
2\cosh \frac{l}{2}=\mathrm{tr} C,
$
which implies (\ref{lengths}).
Similarly, the length of the hole $l_q$ satisfy the relation
$$
2\cosh \frac{l_q}{2}=-\mathrm{tr}\, (ABA^{-1}B^{-1})=2+q^{-3}[3]_q^2(q-1)^2,
$$
in agreement with (\ref{lengthq}). 
\end{proof}

\section{Metallic generalisations}

Following \cite{SV1}, consider now the following ``metallic" generalisation $\gamma_t(n), \,\, n\in \mathbb N$ of Markov irrationalities in Farey parametrisation $t \in [0, \frac{1}{2}]$ shown on Fig. 10.  

\begin{figure}[h]
\begin{center}
    \begin{tikzpicture}[scale=0.65, every node/.style={scale=0.75}]
  \begin{scope}
      \node at (1.75,0.75) {$\frac{0}{1}$};
	\node at (-1.75,0.75){$\frac{1}{2}$};
	\node at (0,4) {$\frac{1}{3}$};
	\node at (4.11, 3.8) {$\frac{1}{4}$};
	\node at (-4.11, 3.8) {$\frac{2}{5}$};
                                                       \draw[thick] (0, -0.5) -- (0, 2);
		\draw[thick] (0,2) -- (1.73,3);
			\draw[thick] (1.73, 3) -- (1.99, 4.48);	
				\draw[thick] (1.99, 4.48) -- (1.31, 5.53);
				\draw[thick] (1.99, 4.48) -- (2.98, 5.24);	
			\draw[thick] (1.73, 3) -- (3.14, 2.49);
				\draw[thick] (3.14, 2.49) -- (4.29, 2.97);
				\draw[thick] (3.14, 2.49) -- (3.72, 1.38);
		\draw[thick] (0,2) -- (-1.73,3);
			\draw[thick] (-1.73, 3) -- (-1.99, 4.48);	
				\draw[thick] (-1.99, 4.48) -- (-1.31, 5.53);
				\draw[thick] (-1.99, 4.48) -- (-2.98, 5.24);	
			\draw[thick] (-1.73, 3) -- (-3.14, 2.49);
				\draw[thick] (-3.14, 2.49) -- (-4.29, 2.97);
				\draw[thick] (-3.14, 2.49) -- (-3.72, 1.38);
          \end{scope}
 \begin{scope}[xshift=12cm]
       \node at (1.75,0.75) {$[\overline{n,n}]$};
	\node at (-1.75,0.75){$[\overline{2n,2n}]$};
	\node at (0,4) {$[\overline{2n,2n, n,n}]$};
	\node at (4.11, 3.8) {$[\overline{2n,2n,n,n, n,n}]$};
	\node at (-4.11, 3.8) {$[\overline{2n,2n,2n,2n,n,n}]$};
                                                       \draw[thick] (0, -0.5) -- (0, 2);
		\draw[thick] (0,2) -- (1.73,3);
			\draw[thick] (1.73, 3) -- (1.99, 4.48);	
				\draw[thick] (1.99, 4.48) -- (1.31, 5.53);
				\draw[thick] (1.99, 4.48) -- (2.98, 5.24);	
			\draw[thick] (1.73, 3) -- (3.14, 2.49);
				\draw[thick] (3.14, 2.49) -- (4.29, 2.97);
				\draw[thick] (3.14, 2.49) -- (3.72, 1.38);
		\draw[thick] (0,2) -- (-1.73,3);
			\draw[thick] (-1.73, 3) -- (-1.99, 4.48);	
				\draw[thick] (-1.99, 4.48) -- (-1.31, 5.53);
				\draw[thick] (-1.99, 4.48) -- (-2.98, 5.24);	
			\draw[thick] (-1.73, 3) -- (-3.14, 2.49);
				\draw[thick] (-3.14, 2.49) -- (-4.29, 2.97);
				\draw[thick] (-3.14, 2.49) -- (-3.72, 1.38);

  \end{scope}
	\end{tikzpicture}
\caption{Metallic Markov irrationalities in Farey parametrisation}
\label{fig:MFCF1}
\end{center}
\end{figure}

%
The numbers $$\varphi_n=[\overline{n,n}]=\frac{n+\sqrt{n^2+4}}{2}$$ are known as {\it metallic numbers} with
$\varphi_1=\frac{1+\sqrt{5}}{2}$ being golden ratio, which explains the terminology.

The motivation for introducing this generalisation came from the study of the Lyapunov spectrum of Markov and Euclid trees \cite{SV1}.

Let $x=[a_1,a_2,\dots]$ be the continued fraction expansion, $C_k(x)=\frac{p_k}{q_k}:=[a_1,\dots,a_k]$ be the corresponding convergent, then the growth of $q_k$ on the Conway topograph is described by the {\it Lyapunov exponent}
$$
\Lambda(x):=\limsup_{k\to\infty}\frac{\ln q_k(x)}{s_k(x)}, \quad s_k(x):=a_1+\dots+a_k.
$$

\begin{theorem}\cite{SV1}
Lyapunov function $\Lambda(x)$ has the following properties:
\begin{itemize}
\item $\Lambda(x)$ is defined for all $x \in \mathbb{R}P^1$ and is $GL_2(\mathbb Z)$-invariant:
$$
\Lambda\left(\frac{ax+b}{cx+d}\right)=\Lambda(x), \quad x \in \mathbb{R}P^1,\, \begin{pmatrix}
  a & b \\
  c & d \\
\end{pmatrix} \in GL_2(\mathbb Z)
$$
\item
$\Lambda(x)=0$ almost everywhere (but Hausdorff dimension of its support is 1)
\item
Lyapunov spectrum
$Spec_\Lambda:=\{\Lambda(x), \,\, x \in  \mathbb{R}P^1\}=[0, \ln \varphi], \,\,\varphi=\frac{1+\sqrt 5}{2}$.
\item
The restriction $\psi_n(t):=\Lambda(\gamma_t(n))$ of Lyapunov function on the set of metallic Markov irrationalities 
 can be extended to continuous convex function of $t\in [0,\, \frac{1}{2}]$.
\end{itemize}
\end{theorem}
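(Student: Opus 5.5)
We will establish the four properties in order, working throughout with the matrix form of continued fractions $M_{[c]}$ from the proof of Theorem \ref{contfrac}. Then $q_k(x)$ is an entry of a product of matrices $\begin{pmatrix} a_i & 1\\ 1 & 0\end{pmatrix}$. We factor each such matrix as a product of $a_i$ elementary Farey steps $L=\begin{pmatrix}1&0\\1&1\end{pmatrix}$ and $R=\begin{pmatrix}1&1\\0&1\end{pmatrix}$, up to the flip $J=\begin{pmatrix}0&1\\1&0\end{pmatrix}$. In this form $s_k(x)$ is simply the number of steps taken along the path to $x$ in the Conway topograph, and $\Lambda(x)$ is the upper exponential growth rate of the matrix norm per step. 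This is the viewpoint we use throughout.

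\emph{Invariance.} We use the fact that $x$ and $gx$, $g\in GL_2(\mathbb Z)$, have continued fractions with the same tail. Hence the paths differ by a finite prefix: the sums $s_k$ shift by a bounded amount, and the $q_k$ change by a bounded multiplicative factor (a fixed matrix multiplies the product). Both effects disappear in the $\limsup$. Definedness for every $x$ is automatic, with rationals and finite paths handled by convention, and $\mathbb RP^1$ handled via $J$.

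\emph{Zero almost everywhere.} We apply Gauss--Kuzmin/Khinchin--Lévy. For Lebesgue-a.e.\ $x$ one has $\ln q_k/k\to \pi^2/(12\ln 2)$, while $s_k/k\to\infty$ because $\mathbb E[a_i]=\infty$ under the Gauss measure (Birkhoff for a nonnegative non-integrable function). Together these give $\Lambda=0$ a.e. For the Hausdorff dimension of the support, we restrict to sets $E_N$ of $x$ whose partial quotients are frequently huge but otherwise bounded by $N$. On these sets $\Lambda>0$, and Jarník/Good-type estimates give $\dim E_N\to1$.

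\emph{Spectrum.} For the upper bound we use the elementary inequality
\[
q_k\le \prod_i(a_i+1)\ \ \text{(up to constant)}, \qquad \ln(a+1)\le a\ln\varphi \ \text{ for } a\ge1,
\]
sharpened with Fibonacci: the product of the $L,R$ matrices with $s$ steps has norm at most $F_{s+1}\sim\varphi^{s}$. This yields $\Lambda\le\ln\varphi$, with equality at $x=\varphi$. To fill the whole interval we would use the continuity in $t$ from the last item, or an explicit family mixing blocks of $1$'s with long blocks of large quotients in prescribed frequencies. Interpolating the frequencies gives every value in $[0,\ln\varphi]$.

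\emph{Convexity on metallic Markov irrationalities.} This is the main obstacle. By Theorem \ref{contfrac} and its metallic analogue in Fig.~10, $\gamma_t(n)$ is periodic with period equal to the concatenation word in the blocks $X=[2n,2n]$ and $Y=[n,n]$ determined by the Christoffel word of $t$. For such a periodic point, $\Lambda(\gamma_t(n))=\ln\rho(W_t)/|W_t|_s$, where $\rho$ is the spectral radius of the period matrix $W_t$, a Cohn-type matrix. By the Fricke identity its trace is governed by a metallic Markov-type equation, and $|W_t|_s=4n\,\#X+2n\,\#Y$. Along the Farey tree, $\#X/\#Y$ depends linearly on $t$, and $\ln\operatorname{tr}W_t$ is asymptotically a norm-like function of the pair $(\#X,\#Y)$. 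We would first try to use the lengths of simple closed geodesics on the associated (metallic) hyperbolic torus: by Thurston/McShane--Rivin, the length extends from rational slopes to a continuous norm on $H_1$, which is convex. Restricted to the affine line of unit weighted length, this norm gives a convex function of $t$ after the reparametrisation. Checking that the denominator normalisation $s_k$ preserves convexity, rather than only quasi-convexity, is where we expect the real difficulty.
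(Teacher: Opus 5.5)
The paper itself only cites \cite{SV1} for this statement. Its surrounding remarks (formula (\ref{psin}), $\psi_1=\frac12\psi$ with Fock's function, and the stable norm of $T_n$ via \cite{SorV}) follow the route you take for the last item. Your arguments for invariance (Serret's theorem on tails), for $\Lambda=0$ a.e.\ (L\'evy's theorem plus $s_k/k\to\infty$) and for $\Lambda\le\ln\varphi$ (the bound $F_{s+1}$ on products of $s$ Farey steps) are fine. You should drop $\ln(a+1)\le a\ln\varphi$, which is false for $a=1,2$.

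Two sub-steps fail as written. The first is the dimension of the support: the sets with ``frequently huge'' partial quotients are the wrong objects. From $q_k\le\prod_{i\le k}(a_i+1)\le(1+s_k/k)^k$ one gets $\ln q_k/s_k\le (k/s_k)\ln(1+s_k/k)\to0$ whenever $s_k/k\to\infty$. So if the huge quotients are unbounded and occur with positive frequency, then $\Lambda=0$ on your set, and Good's theorem (for $a_n\to\infty$) points to dimension $\frac12$, not $1$. No huge quotients are needed. On $E_N=\{x:\ a_i\le N \ \forall i\}$ one has $q_k\ge F_k$ and $s_k\le Nk$, so $\Lambda\ge\ln\varphi/N>0$, and Jarn\'{\i}k's theorem gives $\dim E_N\to1$.

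The second is filling the spectrum: continuity of $\psi_n$ in $t$ does not help by itself. Rational $t$ give only countably many values. At irrational $t$ the extended value is not known to be $\Lambda$ of any point unless you prove $\Lambda(x_t)=\psi_n(t)$ for the corresponding Sturmian point $x_t$. Your second route does work, with two conditions on the alternating blocks of $1$'s and of $N$'s (rates $\ln\varphi$ and $\ln\varphi_N/N\to0$). Their lengths must grow, so that the $O(1)$ error in $\ln q$ at each junction is $o(s_k)$. They must also grow slowly compared with $s_k$, so that $\ln q_k/s_k$ actually converges to the target and the $\limsup$ is not pushed up by a single long block of $1$'s.

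In the last item, the difficulty you anticipate does not exist; what is missing is a count. Since $\frac01\mapsto Y$, $\frac12\mapsto X$ and counts add under mediants, $t=\frac pq$ gives $(\#X,\#Y)=(p,\,q-2p)$. Hence $|W_t|_s=4np+2n(q-2p)=2nq$. It is this pair, not the ratio $\#X/\#Y=t/(1-2t)$, that is linear. Let $\ell_t$ be the length of the closed geodesic of $W_t$ on $T_n$. Then $\ln\rho(W_t)=\cosh^{-1}(M_t(n)/2)=\ell_t/2$, which gives exactly (\ref{psin}): $\psi_n(\frac pq)=\ell_t/(4nq)$.

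Suppose $\ell$ extends to a norm $\|\cdot\|$ on $H_1(T_n;\mathbb R)$ with $\ell_t=\|p[A(n)]+(q-2p)[B(n)]\|$. Homogeneity then gives $\psi_n(t)=\frac1{4n}\|t[A(n)]+(1-2t)[B(n)]\|$. This is a norm composed with an affine map, hence continuous and convex on $[0,\frac12]$, with no quasi-convexity issue and no asymptotics.

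What genuinely needs justification is that norm extension, together with the fact that the Cohn words $W_t$ are the simple closed geodesics in these classes. McShane--Rivin treats the punctured torus ($n=1$, where $\psi_1=\frac12\psi$ is Fock's function). For $n\ge2$ one has $\tr[A(n),B(n)]=2-4n^6<-2$, so $T_n$ has a geodesic boundary. There you need the stable-norm version of the statement, which is what the paper invokes through \cite{SorV}.
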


For $n=1$ we have $\psi_1(t)=\frac{1}{2}\psi(t)$, where $\psi(t)$ is {\it Fock function} \cite{Fock} defined as 
$$
\psi (t) = \frac{1}{q} \cosh^{-1} \left(\frac{3}{2} m_t\right), \,\, t=p/q,
$$
where $m_t$ are the usual Markov numbers.
For general $n$ one can use the theory of Federer-Gromov stable norm to prove that $\psi_n(t)$ is differentiable at all irrational and not differentiable at rational $t \in [0, \frac{1}{2}]$ (see \cite{SorV}).

The corresponding metallic generalisation $\mathcal T^{(n)}_C(2)^*$ of Cohn topograph (which is the conjugated version of the Cohn tree proposed in \cite{SV1}) is shown on Fig. 11 and starts with the matrices 
\begin{equation}
\label{cohna}
A(n) = 
\begin{pmatrix}
 4n^2+1 & 2n \\
  2n & 1 \\
\end{pmatrix},
 \quad B(n) = \begin{pmatrix}
 n^2+1 & n \\
  n & 1 \\
\end{pmatrix}.
\end{equation}
%


\begin{figure}[h]
\begin{center}
	\begin{tikzpicture} [scale=0.7, every node/.style={scale=0.7}]
	\node at (1.75,0.75) 
	{$\begin{pmatrix}
		n^2+1 & n \\ n & 1
	\end{pmatrix}$};
	
	\node at (1.75,-0.05) 
	{{\color{blue} $n^2+2$}};

	\node at (-1.75,0.75) {$\begin{pmatrix}
		4n^2+1 & 2n \\ 2n & 1
	\end{pmatrix}$};
	
	\node at (-1.75,-0.05) 
	{{\color{blue} $4n^2+2$}};
	
	\node at (0,5.25) {$\begin{pmatrix}
		4n^4+7n^2+1 & 4n^3+3n \\ 2n^3+3n & 2n^2+1
	\end{pmatrix}$};

	\node at (0,4.45) 
	{{\color{blue} $4n^4+9n^2+2$}};
	
	\node at (6.11, 3.8) {$\begin{pmatrix}
		4n^6+15n^4+11n^2+1 & n(4n^4+11n^2+4) \\ n(2n^4+7n^2+4) & 2n^4+5n^2+1
	\end{pmatrix}$};	
	
	\node at (6.11,3.0) 
	{{\color{blue} $4n^6+17n^4+16n^2+2$}};
	
	\node at (-6.5, 3.8)	{$\begin{pmatrix}
		16n^6+36n^4+17n^2+1 & n(16n^4+20n^2+5) \\ n(8n^4+16n^2+5) & 8n^4+8n^2+1
	\end{pmatrix}$};
	\draw[thick] (0, -0.5) -- (0,2);
	
	\node at (-6.5,3.0) 
	{{\color{blue} $16n^6+44n^4+25n^2+2$}};

		\draw[thick] (0,2) -- (1.73,3);
			
			\draw[thick] (1.73, 3) -- (2.5, 5.25);	
	
				
					
				
	
			\draw[thick] (1.73, 3) -- (3.96, 2.19);
	
	
				
					
					
		\draw[thick] (0,2) -- (-1.73,3);
		
			\draw[thick] (-1.73, 3) -- (-2.5, 5.25);	
		
		
		
		
		
			\draw[thick] (-1.73, 3) -- (-3.96, 2.19);
		
		
		
		
	\end{tikzpicture}
\end{center}
\label{fig:Cohn_n}
\caption{Metallic Cohn matrices and corresponding Markov numbers}
\end{figure}

Let 
\begin{equation}
	\label{Cohnm)}
	C_t(n)= \begin{pmatrix}
		a_t(n) & b_t(n)\\ c_t(n) & d_t(n)
	\end{pmatrix} 
\end{equation}
be the Cohn matrix corresponding to $t\in \mathbb Q_{[0,\frac{1}{2}]}$ with $C_{\frac{1}{2}}(n)= A(n), \,\, C_{\frac{0}{1}}(n)= B(n).$ 

Motivated by the previous results, we define the {\it metallic Markov numbers} as the traces
\begin{equation}
	\label{metM}
	M_t(n):= tr\, C_t(n)=a_t(n)+d_t(n),
\end{equation}
and the {\it metallic Markov fractions} as
\begin{equation}
	\label{indexn}
	\mu_t(n):= \frac{a_t(n)}{c_t(n)}.
\end{equation}
 When $n=1$ we have the Markov fractions from $[2, 5/2]$ and the usual Markov numbers multiplied by 3. Note that for $n>1$ the denominators of the metallic Markov fractions are not related to the corresponding Markov numbers in a simple way (see Fig.11).

\begin{theorem}\cite{SV1}
The corresponding metallic Markov triples on the Conway topograph satisfy the modified Markov equation
\begin{equation}
	\label{metMar}
	X^2+Y^2+Z^2=XYZ+4-4n^6
	\end{equation}
with the initial triple $X=Y=n^2+2, \, Z=4n^2+2.$
\end{theorem}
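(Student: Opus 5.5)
We will prove the statement with the Fricke identity (\ref{fricke}), exactly as in the proof of the theorem on quantum Markov numbers. The metallic Cohn tree $\mathcal T^{(n)}_C(2)^*$ is built by the same local rule as the Cohn tree. The matrix in a new region is the product of the matrices of the two neighbouring regions, taken in the fixed order, and the region opposite across the edge corresponds to the other product involving an inverse. We may therefore apply (\ref{fricke}) to the three matrices $C_1, C_2, C_3=C_1C_2$ meeting at any vertex of the topograph. With $X=\tr C_1$, $Y=\tr C_2$, $Z=\tr C_3$ this gives
$$
X^2+Y^2+Z^2=XYZ+\tr(C_1C_2C_1^{-1}C_2^{-1})+2 .
$$
The claim then reduces to two facts. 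First, the commutator trace is the same at every vertex. Second, its value is $2-4n^6$.

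For the first fact we use the standard property of the Cohn recursion: every pair of neighbouring matrices is obtained from the initial pair $(A(n),B(n))$ by Nielsen moves of the form $(U,V)\mapsto (U,UV)$ or $(UV,V)$. Since $U(UV)U^{-1}(UV)^{-1}=UVU^{-1}V^{-1}$, and similarly for the other move, the commutator is unchanged under each move. Hence along every edge the commutator of the neighbouring pair equals $A(n)B(n)A(n)^{-1}B(n)^{-1}$ (or $B(n)A(n)B(n)^{-1}A(n)^{-1}$, depending on orientation). These two commutators are conjugate up to inversion, so they have the same trace. This gives the invariance. It also confirms that the Vieta move $Z\mapsto XY-Z$ relates the two regions opposite an edge, since $\tr(UV)+\tr(UV^{-1})=\tr U\,\tr V$ for matrices in $SL_2$.

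For the second fact we compute directly from (\ref{cohna}). Both matrices have determinant $1$; for example, $(4n^2+1)-4n^2=1$. The required commutator trace can be computed from the initial triple without multiplying out the commutator. The traces are $X=4n^2+2$ and $Y=n^2+2$. The product is
$$
A(n)B(n)=\begin{pmatrix} 4n^4+7n^2+1 & 4n^3+3n\\ 2n^3+3n & 2n^2+1\end{pmatrix},
$$
whose trace is $Z=4n^4+9n^2+2$; this agrees with Fig.~11. The needed constant is then $X^2+Y^2+Z^2-XYZ$. The paper states the initial triple as $X=Y=n^2+2$, $Z=4n^2+2$. Under the Fricke relation this triple is another vertex triple: it consists of $B(n)$, $B(n)$ conjugate data, and $A(n)$, or equivalently of the regions $\frac01$, $\frac12$ and a boundary region. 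Evaluated there, the constant is
$$
2(n^2+2)^2+(4n^2+2)^2-(n^2+2)^2(4n^2+2)=4-4n^6 .
$$
Expanding both sides confirms this. Alternatively, the same value follows from the triple $(4n^2+2,\,n^2+2,\,4n^4+9n^2+2)$ by one Vieta move. This gives $\tr[A,B]+2=4-4n^6$.

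The main obstacle is bookkeeping rather than a conceptual difficulty. We must check that the orientation and mirror conventions of $\mathcal T^{(n)}_C(2)^*$ (transposes and reflection) still make each new matrix the product of its neighbours, so that the Nielsen-move argument applies. Transposition reverses products, but it preserves traces and maps commutators to conjugates of commutators, so the invariance should survive. Once this is settled, the theorem follows from the single polynomial identity above, which we would verify symbolically.
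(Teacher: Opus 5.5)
Your proposal is correct and follows the paper's own argument: the Fricke identity combined with $\tr(ABA^{-1}B^{-1})=2-4n^6$ for $A=A(n)$, $B=B(n)$, a value preserved along the metallic Cohn topograph. Your way of obtaining the constant through the Vieta move from $(4n^2+2,\,n^2+2,\,4n^4+9n^2+2)$ is valid, with one small slip: $U(UV)U^{-1}(UV)^{-1}=U\,(UVU^{-1}V^{-1})\,U^{-1}$ is only conjugate to $UVU^{-1}V^{-1}$, not equal to it, which still suffices because only the trace matters.
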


This follows from the Fricke identity (\ref{fricke}) since $tr\, ABA^{-1}B^{-1}=2-4n^6$  for the initial matrices $A=A(n), \, B=B(n)$ (and hence for all other neighbours on the metallic Cohn topograph).
The metallic Markov numbers are part of the metallic Markov triples, which can be computed recursively from the initial triple using the Vieta involution
$
(X,Y,Z) \rightarrow (X,Y, XY-Z)
$
and permutations. Note that for $n>1$ there could be other integer solutions of (\ref{metMar}).

In terms of the metallic Markov numbers the values of the Lyapunov exponents of the metallic Markov irrationalities can be given as the following version of the Fock function
\begin{equation}
	\label{psin}
\Lambda(\gamma_t(n))=\psi_n(t)= \frac{1}{2nq} \cosh^{-1} \left(\frac{1}{2} M_t(n)\right),  \quad t=\frac{p}{q}.
\end{equation}
Geometrically the equation (\ref{metMar}) describes the lengths of the simple closed geodesics on the metallic version of hyperbolic Markov torus $T_n=\mathbb H/<A(n),B(n)>$ with a hole of length 
$l=2\cosh^{-1} (2n^6-1)$ (see \cite{SorV,SV1}).

The Conway topograph $\mathcal T_M^*$ of the metallic Markov fractions is shown on the top half of Fig.~\ref{fig:MFCF}.
On the bottom half we have the Conway topograph $\mathcal T_{con}^*$ of the continued fractions constructed from the initial $[2n, 2n]$, $[n, n]$ via the concatenation rule
$$
[a_1,\dots, a_{2m}]*[b_1,\dots, b_{2n}]=[a_1,\dots, a_{2m}, \, b_1,\dots, b_{2n}].
$$

\begin{figure}[h]
\begin{center}
    \begin{tikzpicture}[scale=0.57, every node/.style={scale=0.75}]
  \begin{scope}
        \node at (1.75,0.75) {$\displaystyle\frac{n^2+1}{n}$};
	\node at (-1.75,0.75){$\displaystyle\frac{4n^2+1}{2n}$};
	\node at (0,4) {$\displaystyle\frac{4n^4+7n^2+1}{n(2n^2+3)}$};
	\node at (4.61, 3.8) {$\displaystyle\frac{4n^6+15n^4+11n^2+1}{n(2n^4+7n^2+4)}$};
	\node at (-4.61, 3.8) {$\displaystyle\frac{16n^6+36n^4+17n^2+1}{n(8n^4+16n^2+5)}$};
                                                       \draw[thick] (0, -0.5) -- (0, 2);
		\draw[thick] (0,2) -- (1.73,3);
			\draw[thick] (1.73, 3) -- (1.99, 4.48);	
				\draw[thick] (1.99, 4.48) -- (1.31, 5.53);
				\draw[thick] (1.99, 4.48) -- (2.98, 5.24);	
			\draw[thick] (1.73, 3) -- (3.14, 2.49);
				\draw[thick] (3.14, 2.49) -- (4.29, 2.97);
				\draw[thick] (3.14, 2.49) -- (3.72, 1.38);
		\draw[thick] (0,2) -- (-1.73,3);
			\draw[thick] (-1.73, 3) -- (-1.99, 4.48);	
				\draw[thick] (-1.99, 4.48) -- (-1.31, 5.53);
				\draw[thick] (-1.99, 4.48) -- (-2.98, 5.24);	
			\draw[thick] (-1.73, 3) -- (-3.14, 2.49);
				\draw[thick] (-3.14, 2.49) -- (-4.29, 2.97);
				\draw[thick] (-3.14, 2.49) -- (-3.72, 1.38);
          \end{scope}
 \begin{scope}[yshift=-7cm]
        \node at (-1.75,0.75) {$\displaystyle[2n,2n]$};
	\node at (1.75,0.75){$\displaystyle[n,n]$};
	\node at (0,4) {$\displaystyle[2n,2n,n,n]$};
	\node at (3.9, 3.8) {$\displaystyle[2n,2n,n,n,n,n]$};
	\node at (-4.1, 3.8) {$\displaystyle[2n,2n,2n,2n,n,n]$};
        \draw[thick] (0, -0.5) -- (0, 2);
		\draw[thick] (0,2) -- (1.73,3);
			\draw[thick] (1.73, 3) -- (1.99, 4.48);	
				\draw[thick] (1.99, 4.48) -- (1.31, 5.53);
				\draw[thick] (1.99, 4.48) -- (2.98, 5.24);	
			\draw[thick] (1.73, 3) -- (3.14, 2.49);
				\draw[thick] (3.14, 2.49) -- (4.29, 2.97);
				\draw[thick] (3.14, 2.49) -- (3.72, 1.38);
		\draw[thick] (0,2) -- (-1.73,3);
			\draw[thick] (-1.73, 3) -- (-1.99, 4.48);	
				\draw[thick] (-1.99, 4.48) -- (-1.31, 5.53);
				\draw[thick] (-1.99, 4.48) -- (-2.98, 5.24);	
			\draw[thick] (-1.73, 3) -- (-3.14, 2.49);
				\draw[thick] (-3.14, 2.49) -- (-4.29, 2.97);
				\draw[thick] (-3.14, 2.49) -- (-3.72, 1.38);
  \end{scope}
	\end{tikzpicture}
\caption{Conway topograph of the metallic Markov fractions}
\label{fig:MFCF}
\end{center}
\end{figure}

\begin{theorem}
\label{contfracmet}
The continued fraction expansions of the metallic Markov fractions is given by the juxtaposition of the metallic Conway topographs  $\mathcal T_M^*(n)$ and $\mathcal T_{con}^*(n).$
\end{theorem}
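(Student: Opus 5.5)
The plan is to copy the proof of Theorem \ref{contfrac}, with the initial Cohn matrices replaced by their metallic analogues (\ref{cohna}). The first step is to factor these matrices into elementary continued-fraction matrices. Writing
$$
M_{[c]}=\begin{pmatrix} c & 1\\ 1 & 0\end{pmatrix},
$$
we have
$$
\begin{pmatrix} n & 1\\ 1 & 0\end{pmatrix}\begin{pmatrix} n & 1\\ 1 & 0\end{pmatrix}=\begin{pmatrix} n^2+1 & n\\ n & 1\end{pmatrix}=B(n),
$$
and likewise
$$
\begin{pmatrix} 2n & 1\\ 1 & 0\end{pmatrix}^2=\begin{pmatrix} 4n^2+1 & 2n\\ 2n & 1\end{pmatrix}=A(n).
$$
Hence $B(n)=M_{[n,n]}$ and $A(n)=M_{[2n,2n]}$. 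These are exactly the labels of the two initial regions of $\mathcal T_{con}^*(n)$.

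The second step is to use how the topograph $\mathcal T^{(n)}_C(2)^*$ is built. It is generated from $A(n)$ and $B(n)$ by the Cohn rule: the new matrix in a region adjacent to two neighbours is the product of their matrices, taken in the order fixed by the orientation of the mirror topograph. This is the same order as in Fig.~6, where the left factor is the matrix with the larger index. The concatenation topograph $\mathcal T_{con}^*(n)$ is generated by $[a]*[b]=[a,b]$ with the same orientation. Combining this with the identity
$$
M_{[a]}M_{[b]}=M_{[a]*[b]},
$$
an induction on the depth in the Farey tree shows that $C_t(n)=M_{[c(t)]}$ for every $t$. Here $[c(t)]$ denotes the label of the corresponding region of $\mathcal T_{con}^*(n)$, and it always has an even number of partial quotients, since concatenating even-length words gives an even-length word. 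The base case is the factorisation above.

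The third step reads off the fraction. By the standard correspondence, the first column of $M_{[c_1,\dots,c_k]}$ is $(p_k,q_k)^\top$, where $p_k/q_k=[c_1,\dots,c_k]$. Since $C_t(n)=M_{[c(t)]}$, this first column is $(a_t(n),c_t(n))^\top$, so
$$
\mu_t(n)=\frac{a_t(n)}{c_t(n)}=[c(t)].
$$
This is exactly the juxtaposition claimed in the theorem. As a check, the region between the two initial ones gives $A(n)B(n)=M_{[2n,2n,n,n]}$ with entries $4n^4+7n^2+1$ and $2n^3+3n$, which matches Fig.~11.

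The main obstacle is making sure the multiplication order in the metallic Cohn topograph agrees with the left-to-right orientation of the concatenation topograph at every vertex, not just at the first one. The recursion $C_{t\oplus s}=C_tC_s$ has to be applied consistently with the mirror reflection. I would settle this as in Theorem \ref{contfrac}: fix the convention at the root using the computation $A(n)B(n)$ above, and note that the local rule is the same at every vertex on both topographs, so the induction carries the convention along.
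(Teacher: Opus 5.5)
Your proposal is correct and matches the paper's argument, which is stated only as a straightforward generalisation of the proof of Theorem \ref{contfrac}: factor $A(n)=M_{[2n,2n]}$ and $B(n)=M_{[n,n]}$, then propagate through the topograph using $M_{[a]}M_{[b]}=M_{[a]*[b]}$. You also rightly note that, since $\mu_t(n)$ is defined directly as the first-column ratio $a_t(n)/c_t(n)$, the step through Aigner's form of $C_t(a)$ and Theorem \ref{main}, which the $n=1$ case needed, is not required here.
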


The proof is a straightforward generalisation from the case $n=1$ considered above.

Finally, the $q$-version of the metallic Markov fractions can be described in a similar way using the following {\it quantum metallic Cohn matrice}s 
\begin{gather}
\label{qcohna}
A_q^{(n)}=R_q^{2n} L_q^{2n} = 
\begin{pmatrix}
 q^{2n} + [2n]_q [2n]_{q^{-1}} & q^{-2n} [2n]_q \\
  [2n]_{q^{-1}} & q^{-2n} \\
\end{pmatrix}, \\
 \quad B_q^{(n)}= R_q^n L_q^n = \begin{pmatrix}
 q^{n} + [n]_q [n]_{q^{-1}} & q^{-n} [n]_q \\
 [n]_{q^{-1}} & q^{-n} \\
\end{pmatrix}
\end{gather}
(see more detail in \cite{EV}).
The $q$-version of metallic numbers and their properties were studied by Ren \cite{Ren}  and, more recently, by Pedon \cite{Pedon}.

\section*{Acknowledgements}

I am very grateful to Sam Evans for many useful discussions and help during the preparation of this paper. I would also like to thank Alexander Thomas and Emine Yildirim for pointing out to the very relevant papers \cite{JPRT} and \cite{Oguz} and Boris Springborn for the helpful comments.



\begin{thebibliography}{99}
	
\bibitem{Aigner}
M. Aigner
{\it Markov's Theorem and 100 Years of the Uniqueness Conjecture: A Mathematical Journey from Irrational Numbers to Perfect Matchings}. Springer, 2013.

\bibitem{CS}
İ.  Çanakçı, R. Schiffler {\it Snake graphs and continued fractions.}
Eur. J. Comb. {\bf 86} (2020), article 103081.

 \bibitem{Cohn}
 H. Cohn {\it Approach to Markoff's minimal forms through modular functions}.
  Ann. Math. {\bf 61}(1955),1-12.
	
 \bibitem{Conway}
 J.H. Conway {\it The Sensual (Quadratic) Form}, Carus Mathematical Monographs, Vol.26.
MAA, 1997.

\bibitem{EJMGO}
S. Evans, P. Jouteur, S. Morier-Genoud, V. Ovsienko
{\it On $q$-deformed Markov numbers. Cohn matrices and
perfect matchings with weighted edges.} arXiv:2507.19080.

\bibitem{EVW}
S.J. Evans, A.P. Veselov, B. Winn {\it Quantum Kronecker fractions.} JXM {\bf 2(1)} (2026), 63–80.

\bibitem{EV}
S.J. Evans, A.P. Veselov {\it Quantum metallic Markov fractions.} In preparation.
 
\bibitem{Fock}
  V.V. Fock {\it Dual Teichm\"{u}ller Spaces}. arxiv:dg-ga/9702018v3, 1997.

\bibitem{Gbur}
M.E. Gbur {\it On the minimum of zero indefinite binary quadratic forms.} Mathematika
{\bf 25(1)}(1978), 94-106.

\bibitem{Goldman}
W.M. Goldman {\it The modular group action on real characters of a one-holed torus.} Geometry and Topology {\bf 7} (2003), 443-486.
 
\bibitem{Gorshkov}
D.S. Gorshkov {\it Geometry of Lobachevskii in connection with certain questions of arithmetic.} PhD Thesis, 1953 (in Russian). Zap. Nauch. Sem. LOMI {\bf 67} (1977), 39-85. English transl. in J. Soviet Math. {\bf 16} (1981), 788-820.
 
 \bibitem{Haas}
A. Haas {\it Diophantine approximation on hyperbolic Riemann surfaces.} Acta Math. {\bf156} (1986), 33-82.
 
\bibitem{HP} 
P. Hacking, Y. Prokhorov {\it Smoothable del Pezzo surfaces with quotient singularities.} Compositio Math. {\bf 146} (2010), 169-192.



\bibitem{JPRT}
P. Jouteur, O. Paris-Romaskevich, A. Thomas 
{\it Plane geometry of $q$-rationals and Springborn operations.} arxiv.2603.04295.

\bibitem{Keen} 
L. Keen {\it  A rough fundamental domain for Teichm\"uller spaces.} Bulletin of the AMS {\bf 83(6)} (1977), 1199-1226.

\bibitem{Kogiso}
T. Kogiso {\it $q$-deformations and $t$-deformations of the Markov triples.} arXiv:2008.12913.


\bibitem{Markov}
 A.A. Markov {\it Sur les formes quadratiques binaires ind\'efinies}.
	Mathematische Annalen, {\bf 15} (1879), 381-406; {\bf 17} (1880), 379-399.
	
\bibitem{MGO}
S. Morier-Genoud and V. Ovsienko
{\it $q$-deformed rationals and {$q$}-continued fractions.}
Forum Math.\ Sigma {\bf 8} (2020), e13, 55pp.

\bibitem{MGOV}
S. Morier-Genoud, V. Ovsienko and A.P. Veselov
{\it Burau representation of braid groups
and $q$-rationals.} IMRN {\bf 10} (2024),  8618-8627.


\bibitem{Oguz}
E. Kantarcı Oguz {\it Oriented posets, rank matrices and q-deformed Markov numbers.} Discrete Mathematics,
348(2) (2025), 114256.

\bibitem{LMG}
L. Leclere, S. Morier-Genoud {\it $q$-deformations in the modular group and of the real quadratic irrational
numbers.} Adv. Appl. Math. {\bf 130} (2021) Paper No. 102223, 28 pp.
	
\bibitem{Pedon}
E. Pedon {\it Analytic properties of $q$-metallic numbers.} arXiv:2604.19898.

\bibitem{Ren}
X. Ren {\it On radiuses of convergence of $q$-metallic numbers and related $q$-rational numbers.} Res.
Number Theory {\bf 8:3} (2022), Paper No. 37, 14 pages.

\bibitem{Rud}
A.N. Rudakov {\it The Markov numbers and exceptional bundles
on $\mathbb P^2.$} Mathematics of the USSR-Izvestiya {\bf 32:1} (1989), 99–112.

\bibitem{Scherich}
N. Scherich {\it Classification of the real discrete specializations of the Burau representation of $B_3$.} 
Math. Proc. Camb. Phil. Soc. {\bf 168} (2020).



\bibitem{SorV}
A. Sorrentino, A.P. Veselov {\it Markov numbers, Mather's beta-function and stable norm.} Nonlinearity, {\bf 32:6} (2019), 2147--2156.


\bibitem{SV1}
K. Spalding and A.P. Veselov {\it Lyapunov spectrum of Markov and Euclid trees.} Nonlinearity {\bf 30} (2017), 4428-53.

\bibitem{Springborn}
B. Springborn {\it The worst approximable rational numbers.} J. Number Theory {\bf 263} (2024), 153-205.

\bibitem{V25}
A.P. Veselov  {\it Markov fractions and the slopes of the exceptional bundles on $\mathbb P^2$.} arXiv:2501.06779. 

\end{thebibliography}
\end{document}